\documentclass{amsart}
\usepackage[utf8]{inputenc}
\usepackage{amsfonts, amssymb}
\usepackage{amssymb,amsmath,amscd,euscript,verbatim,array}
\usepackage{geometry}
\usepackage{anysize}
\usepackage{fancyhdr}
\usepackage{indentfirst}
\usepackage{graphicx}
\usepackage{color}
\usepackage{ifpdf}
\usepackage{marginnote}
\usepackage{enumitem}
\usepackage{mathrsfs}
\usepackage{amsthm}
\usepackage{CJK}
\usepackage{bm}
\setlist[itemize]{leftmargin=2em}

\newcommand{\N}{\mathbb{N}}
\newcommand{\Z}{\mathbb{Z}}
\newcommand{\R}{\mathbb{R}}

\newcommand{\Lip}{\mathrm{Lip}}
\newcommand{\Id}{\mathrm{Id}}

\newcommand{\la}{\lambda}
\newcommand{\al}{\alpha}

\newcommand{\T}{\mathbb{T}}

\newtheorem{Theorem}{Theorem}

\newtheorem{Definition}{Definition}[section]
\newtheorem{Proposition}[Definition]{Proposition}
\newtheorem{Corollary}[Theorem]{Corollary}
\newtheorem{Lemma}[Definition]{Lemma}
\newtheorem{Remark}[Theorem]{Remark}

\newcommand{\bthm}{\begin{Theorem}}
	\newcommand{\ethm}{\end{Theorem}}
\newcommand{\bpr}{\begin{Proposition}}
	\newcommand{\epr}{\end{Proposition}}
\newcommand{\blm}{\begin{Lemma}}
	\newcommand{\elm}{\end{Lemma}}
\newcommand{\bex}{\begin{Exercise}}
	\newcommand{\eex}{\end{Exercise}}
\newcommand{\be}{\begin{equation}}
	\newcommand{\ee}{\end{equation}}
\newcommand{\beal}{\begin{aligned}}
	\newcommand{\enal}{\end{aligned}}
\newcommand{\brm}{\begin{Remark}}
	\newcommand{\erm}{\end{Remark}}

\begin{document}

	\title[On the sharpness of the \(C^1\)-norm threshold for perturbations]{On the sharpness of the \(C^1\)-norm threshold for perturbations
		in the normally hyperbolic invariant manifold theorem\\---a toy model perspective}
	
	\author{Yujie Huang}
	\address{School of Mathematics and Statistics, Beijing Institute of Technology, Beijing 100081, China}
	\email{huangyujie@bit.edu.cn}
	
	\author{Junhao Li}
	\address{School of Mathematics and Statistics, Beijing Institute of Technology, Beijing 100081, China}
	\email{junhaoli@bit.edu.cn}
	
	\author{Lin Wang}
	\address{School of Mathematics and Statistics, Beijing Institute of Technology, Beijing 100081, China}
	\email{lwang@bit.edu.cn}

	\subjclass[2010]{Primary 37J40; Secondary 37E40}
	
	\keywords{Dissipative twist maps, Normally hyperbolic invariant manifolds, Invariant graphs}

	\begin{abstract}
		
		The classical normally hyperbolic invariant manifold theorem asserts that a \(C^1\) normally hyperbolic invariant manifold persists under \(C^1\) small perturbations. For a family of standard-like dissipative twist maps,  we show that the threshold \((1-\sqrt{\lambda})^2\) for the \(C^1\)-norm of the perturbation is sharp: there exists a $C^\infty$ perturbation \(\phi\) with \(\|\phi\|_{C^1} = (1-\sqrt{\lambda})^2\) such that the map preserves a unique invariant graph, but this graph possesses  non-differentiable points. On the other hand, whenever \(\|\phi\|_{C^1} < (1-\sqrt{\lambda})^2\), the \(C^1\) normally hyperbolic invariant manifold persists, where \(\lambda\) denotes the Jacobian determinant of the map. This provides a critical threshold phenomenon for the persistence of invariant graphs in dissipative twist maps.
	\end{abstract}

	\maketitle
	
	\vspace{2em}
	
	%
	\section{\sc Introduction}
	Let $\alpha:=(\alpha_1,\alpha_2)\in \R^2$. Fix $\lambda\in (0,1)$. We consider
	\begin{equation}\label{F1}
		T_{\lambda,\alpha}(x,y):=(x+\alpha_1+\lambda y,\alpha_2+\lambda y).
	\end{equation}
	The parameter $\lambda$ controls the dissipation, and $\alpha_1,\alpha_2$
	are constants that determine the translation in the $x$ and $y$ directions, respectively. For any parameter pair $(\alpha_1,\alpha_2) \in \mathbb{R}^2$, a direct calculation shows that the integrable system \eqref{F1} possesses a unique invariant graph
	\begin{equation}\label{ingm1}
		\Gamma_0 = \mathbb{R} \times \left\{\frac{\alpha_2}{1-\lambda}\right\}.
	\end{equation}

	The map $T_{\la,\alpha}$ restricted on $\Gamma_0$ reduces to a circle diffeomorphism $g_\al$ with rotation number $\al_1+\frac{\lambda}{1-\lambda}\al_2$. Note that the invariant graph $\Gamma_0$  is normally hyperbolic. More precisely, it is  so called immediately absolutely $r$-normally hyperbolic for any $r\in\N$ in terms of \cite[Definition 2]{HPS}. This notion is also referred to as $r$-bunching (see for instance \cite{KH}).

	\begin{Definition}
		Let $V$ be a smooth compact submanifold of a smooth manifold $\mathcal M$.
		Suppose $f: \mathcal M \to \mathcal M$ is a $C^{1}$ diffeomorphism and $f(V) = V$.
		Let $T_V \mathcal M$, the tangent bundle of $\mathcal M$ over $V$, have a $Tf$-continuous invariant splitting
		\[
		T_V \mathcal M = N^u \oplus TV \oplus N^s .
		\]
		For any $z \in \mathcal M$ put
		\[
		T_z f = N^u_z f \oplus V_z f \oplus N^s_z f .
		\]
		Thus,
		\[
		Tf|_{TV} = Vf,\quad
		Tf|_{N^u} = N^u f,\quad
		Tf|_{N^s} = N^s f .
		\]
		$f$ is \emph{immediately absolutely $r$-normally hyperbolic at $V$} iff
		$f$ is $C^r$ and there is a Riemann structure on $T\mathcal M$ such that for all $z \in V$,
		$k=0$ and $r$:
		\begin{enumerate}
			\item[(a)] $\inf_z m(N^u_z f) > \sup_z \|V_z f\|^k$ \quad and
			\item[(b)] $\sup_z \|N^s_z f\| < \inf_z m(V_z f)^k$ ,
		\end{enumerate}
		where the minimum norm $m(A)$ of a linear transformation $A$ is defined as
		\[
		m(A) = \inf \{|Ax| : |x| = 1\} .
		\]
		When $A$ is invertible, $m(A) = \|A^{-1}\|^{-1}$.
	\end{Definition}

	For each $(x,y)\in\R^2$,
	\[DT_{\la,\alpha}(x,y)=\left(
	\begin{array}{cc}
		1 & \la \\
		0 & \la \\
	\end{array}
	\right)=P\left(
	\begin{array}{cc}
		1 & 0 \\
		0 & \la \\
	\end{array}
	\right)P^{-1},
	\]
	where $P=\left(
	\begin{array}{cc}
		1 & \frac{-\la}{1-\la} \\
		0 & 1 \\
	\end{array}
	\right)$. Hence, we have a $TT_{\la,\al}$-invariant splitting
	\[T\Gamma\oplus N^s,\]
	where $TT_{\la,\al}$ denotes the tangent map of $T_{\la,\al}$.
	Moreover, there exists a constant $C$ such that for all $i\in\Z$ and $z\in \Gamma$, there hold
	\begin{equation}\label{hype}
		\frac{1}{C}\la^i|v|\leq |DT_{\la,\al}^i(z)v|\leq C\la^i|v|,\quad \mathrm{for}\ v\in N^s_z,
	\end{equation}
	\[\frac{1}{C}|v|\leq |DT^i_{\la,\al}(z)v|\leq C|v|,\quad \mathrm{for}\ v\in T_z\Gamma,\]
	where the constant $C$ can be estimated by the norms of $P$ and $P^{-1}$.
	
	We consider the persistence of invariant graphs under perturbations. More precisely, we study the three-parameter family
	\begin{equation}\label{Fp}
		F^\phi_{\lambda,\alpha}(x,y) = \big(x + \alpha_1 + \lambda y + \phi(x),\; \alpha_2 + \lambda y + \phi(x)\big),
	\end{equation}
	where \(\lambda\in(0,1)\), \(\alpha=(\alpha_1,\alpha_2)\in\mathbb{R}^2\), and \(\phi\) is a \(1\)-periodic \(C^1\) function satisfying \(\int_{0}^{1} \phi(x)\,dx = 0\).  The map \(F^\phi\colon\mathbb{R}^2\to\mathbb{R}^2\) is the lift to the universal cover of a dissipative twist map on \(\mathbb{T}\times\mathbb{R}\). If \(\alpha_2=0\), then \(F^\phi\) is called an exact conformally symplectic twist map. For related studies on dissipative twist maps and exact conformally symplectic twist maps, we refer the reader to \cite{Cas87,CCD13,CCD20,CCD22,Lec87,Mas23} and the references therein.
	
	The classical normally hyperbolic invariant manifold (NHIM) theorem asserts that for fixed \(\lambda\) and any \(\alpha_1,\alpha_2\in\mathbb{R}\), if \(\|\phi\|_{C^1}\) is sufficiently small, then \(F^\phi\) preserves a \(C^1\) invariant graph, where
	\[\|\phi\|_{C^1}:=\max\{\|\phi\|_{C^0},\|\phi'\|_{C^0}\}.\]
	The NHIM theorem provides a sufficient condition for the existence of a \(C^1\) invariant graph. In this paper, we investigate the necessity aspect from a quantitative viewpoint, i.e., we address the following question:
	
	\vspace{1ex}
	
	\begin{itemize}
		\item \textbf{Question:} {\it For any \(\lambda\in(0,1)\), what is  the critical size of the \(C^1\)-norm of the perturbation \(\phi\) that guarantees a \(C^1\) invariant graph?}
	\end{itemize}
	
	\vspace{1ex}
	\subsection{Main results}
	Birkhoff's graph theorem \cite{Bir20} states that if \(F_{\lambda,\alpha}^\phi\) admits an invariant graph, then the graph must be at least Lipschitz. Moreover, it is easy to show that the invariant graph, if it exists, is necessarily unique. On the one hand, we give a quantitative version of the NHIM theorem for the model \eqref{Fp}.
	
	\begin{Theorem}\label{M2}
		Given \(\lambda \in (0,1)\), for any \(\alpha \in \mathbb{R}^2\) and any \(C^1\) perturbation \(\phi\) with zero average satisfying
		\begin{equation}\label{contrac}
			\|\phi\|_{\mathrm{Lip}} \leq (1-\sqrt{\lambda})^2,
		\end{equation}
		the map \(F_{\lambda,\alpha}^\phi\) admits a unique Lipschitz invariant graph. Moreover, if
		\begin{equation}\label{m22con}
			\|\phi\|_{C^1} < (1-\sqrt{\lambda})^2,
		\end{equation}
		then the map \(F_{\lambda,\alpha}^\phi\) admits a unique \(C^1\) invariant graph, and this invariant graph is $1$-normally hyperbolic.
	\end{Theorem}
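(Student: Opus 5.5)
The plan is to write the invariant graph as \(\Gamma_u=\{(x,u(x))\}\) with \(u\) \(1\)-periodic, and to realize it as the fixed point of a forward graph transform. Set \(\varepsilon:=\|\phi\|_{\mathrm{Lip}}\) (for Part 1) or \(\varepsilon:=\|\phi\|_{C^1}\) (for Part 2); in both cases \(|\phi'|\le\varepsilon\), which is all the argument uses.

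The structural fact driving everything is that \(F^\phi_{\lambda,\alpha}(x,y)=(x',y')\) satisfies \(x'-y'=x+\alpha_1-\alpha_2\). Hence the preimage of a point is given explicitly by
\[x=x'-y'-(\alpha_1-\alpha_2),\qquad y=\frac{y'-\alpha_2-\phi(x)}{\lambda}.\]
Let \(h_u(x)=x+\alpha_1+\lambda u(x)+\phi(x)\). Then \(F(\Gamma_u)\) is the graph of
\[\mathcal T u=(\alpha_2+\lambda u+\phi)\circ h_u^{-1},\]
provided \(h_u\) is a homeomorphism of degree one. This holds whenever \(\lambda L+\varepsilon<1\), where \(L=\mathrm{Lip}(u)\). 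Invariant graphs are exactly the fixed points of \(\mathcal T\).

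\textbf{Step 1 (invariant Lipschitz class).} If \(|u'|\le L\), the slopes of \(\mathcal T u\) have the form \((\lambda s+p)/(1+\lambda s+p)\) with \(|s|\le L\) and \(|p|\le\varepsilon\). The worst case is \(\lambda s+p=-(\lambda L+\varepsilon)\), so \(\mathrm{Lip}(\mathcal T u)\le L\) as soon as
\[\lambda L^2-(1-\lambda-\varepsilon)L+\varepsilon\le 0 .\]
This quadratic has a real root if and only if \(1-\lambda-\varepsilon\ge 2\sqrt{\lambda\varepsilon}\), that is, \((\sqrt\varepsilon+\sqrt\lambda)^2\le 1\), which is exactly \eqref{contrac}. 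I will take \(L\) to be the smaller root. This explains the threshold \((1-\sqrt\lambda)^2\).

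\textbf{Step 2 (contraction and uniqueness).} Fix \(x'\). Let \(x_u,x_v\) be the base points of the preimages of \((x',\mathcal T u(x'))\) and \((x',\mathcal T v(x'))\), and put \(D=\mathcal Tu(x')-\mathcal Tv(x')\). By the formula above, \(x_u-x_v=-D\). The invariance relation then gives
\[|D|\le\lambda\|u-v\|_{C^0}+(\lambda\,\mathrm{Lip}(v)+\varepsilon)|D| .\]
Only \(\mathrm{Lip}(v)\le L\) is needed here, so \(\|\mathcal Tu-\mathcal Tv\|_{C^0}\le \frac{\lambda}{1-\lambda L-\varepsilon}\|u-v\|_{C^0}\).

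For \(L\) the smaller root we have \(1-\lambda L-\varepsilon\ge(1+\lambda-\varepsilon)/2\). This exceeds \(\lambda\) because \(\varepsilon\le(1-\sqrt\lambda)^2<1-\lambda\). Therefore \(\mathcal T\) is a contraction on the complete space of \(1\)-periodic \(L\)-Lipschitz functions, and its fixed point is a Lipschitz invariant graph \(u_*\).

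For uniqueness, any other invariant graph \(u\) is continuous (indeed Lipschitz by Birkhoff) and periodic. Applying the estimate with \(v=u_*\) forces \(u=u_*\), since the estimate only requires the Lipschitz bound on \(v\).

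\textbf{Step 3 (\(C^1\) regularity and normal hyperbolicity).} Under \eqref{m22con} I will apply a fiber-contraction (\(C^1\) section) argument to the slope cocycle
\[s\mapsto \frac{\lambda s+\phi'(x)}{1+\lambda s+\phi'(x)}\]
over the base map \(h_{u_*}^{-1}\). The interval \([-L,L]\) is invariant under this map. Its fiber Lipschitz constant is at most \(\lambda/(1-\lambda L-\varepsilon)^2\).

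The key computation is at the critical value \(\varepsilon=(1-\sqrt\lambda)^2\). There the double root is \(L=(1-\sqrt\lambda)/\sqrt\lambda\), and \(1-\lambda L-\varepsilon=\sqrt\lambda\) exactly, so the fiber constant equals \(1\). For \(\varepsilon<(1-\sqrt\lambda)^2\), the smaller root gives \(1-\lambda L-\varepsilon>\sqrt\lambda\), hence the fiber map is a strict contraction. The fiber-contraction theorem then produces a continuous invariant slope field, and one checks that it is the derivative of \(u_*\), so \(u_*\) is \(C^1\).

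The same inequality yields \(1\)-normal hyperbolicity. The tangential derivative is \(h_{u_*}'\ge 1-\lambda L-\varepsilon>\sqrt\lambda\). The normal direction is contracted by \(\lambda\) through the linear structure of Step 2. Thus the normal contraction \(\lambda\) is strictly dominated by \(m(V_zf)^k\) for \(k=0,1\), using the adapted metric.

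\textbf{Main obstacle.} I expect the main difficulty to be Step 3. The fiber contraction must be made uniform and the limiting slope identified with \(u_*'\), which requires the usual "derivative of the limit equals the limit of derivatives" argument starting from a smooth initial graph. In addition, the normal bundle of Definition 1 must be set up so that the domination rates match the sharp constant \(\sqrt\lambda\). I would first try to obtain both from the single inequality \(1-\lambda L-\varepsilon>\sqrt\lambda\).
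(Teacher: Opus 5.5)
Your proposal is sound apart from one misstated rate (second paragraph). Steps 1--2 follow the paper: a forward graph transform on an invariant Lipschitz class, contracting in $C^0$. Your quadratic $\lambda L^2-(1-\lambda-\varepsilon)L+\varepsilon\le 0$ is the paper's estimate $\frac{\lambda L+A_\lambda}{1-\lambda L-A_\lambda}\le L$ from Lemma~\ref{welld} made explicit. Its double root at $\varepsilon=(1-\sqrt\lambda)^2$ is the paper's $K_2=1/\sqrt\lambda-1$. Your factor $\lambda/(1-\lambda L-\varepsilon)$, obtained from the explicit inverse, equals the paper's $\lambda(1+K_2)=\sqrt\lambda$ at criticality. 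Because it needs a Lipschitz bound on only one graph, it gives uniqueness among all bounded invariant graphs directly.

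Step 3 takes a genuinely different route. The paper upgrades to $C^1$ via the cone field $\mathcal C_\zeta$, Bouligand tangent cones and the Bernard--Bonatti criterion (Lemma~\ref{difli}), plus a separate accumulation-point argument for continuity of $x\mapsto L(x)$. You instead use the fiber-contraction ($C^r$ section) route, which the paper mentions and sets aside. Both rest on the same fact: $DF$ acts on slopes in $[-L,L]$ with Lipschitz constant $\lambda/(1-\lambda L-\varepsilon)^2<1$. Your route gives an explicit rate, and continuity of $u_*'$ comes for free as a uniform limit of the continuous $u_n'$. The cost is iterating $(u_n,u_n')$ from a smooth initial graph; the paper's route needs no approximation scheme. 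Likewise, your bound $g'=1+\lambda u_*'+\phi'\ge 1-\lambda L-\varepsilon>\sqrt\lambda$ replaces Lemma~\ref{lem:universal-g-lower} (derived Herman--Mather formula plus a measure argument) with a one-line estimate. That lemma is more general (any $\tau$, no a priori slope bound), but it is not needed for this theorem.

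The claim that ``the normal direction is contracted by $\lambda$'' is wrong. Since $\det DF\equiv\lambda$ and $DF$ acts on $T\Gamma$ by $g'(x)$, the induced action on the normal (quotient) line is multiplication by $\lambda/g'(x)$, exactly as computed in Lemma~\ref{lem:nh-iff}. Because $\int_0^1 g'=1$, this exceeds $\lambda$ wherever $g'<1$, which happens somewhere unless $g'\equiv 1$.

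The correct rate still closes with your key inequality: $\sup_x \lambda/g'(x)\le \lambda/(1-\lambda L-\varepsilon)<\sqrt\lambda<1-\lambda L-\varepsilon\le \inf_x g'(x)$, and $\sqrt\lambda<1$ handles $k=0$. Note that $\lambda/(1-\lambda L-\varepsilon)$ is precisely your Step 2 factor, which is presumably what ``the linear structure of Step 2'' was meant to deliver.

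You must also construct the continuous $DF$-invariant complement $E^s$ that the definition requires before you can speak of an adapted metric. The paper writes $w=\tilde\alpha e_1+e_2$ with $e_1=(1,u_*')$ and $e_2=(0,1)$. It solves $\tilde\alpha(x)=\frac{\lambda}{g'(x)^2}\tilde\alpha(g(x))-\frac{\lambda}{g'(x)}$ by a contraction with constant $\sup_x\lambda/g'(x)^2<1$, which is the same inequality again. It then declares $\{e_1,w\}$ orthonormal, so the tangential and normal norms are exactly $g'$ and $\lambda/g'$.
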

	
	The proof of Theorem~\ref{M2} is rather standard. First, under condition \eqref{contrac}, the existence of a Lipschitz invariant graph is obtained via the classical graph transform method. Then, under condition \eqref{m22con}, one obtains that \(F_{\lambda,\alpha}^\phi\) is a \(1\)-fiber contraction (see the definition in \cite[Page 31]{HPS}). On this basis, the regularity of the invariant graph is upgraded to \(C^1\). This upgrade can be carried out using the Lipschitz jet technique (see the proof of the \(C^r\) section theorem in \cite[Pages 33--35]{HPS}), or by exploiting the geometric structure of the invariant graph itself (see \cite{BB}). Here, to better take advantage of the specific features of the model under consideration, we adopt the latter approach.
	
	On the other hand, we establish the following result.
	
	\vspace{1ex}
	
	\begin{Theorem}\label{Th1}
		Given \(\lambda \in (0,1)\), for any rational number \(\beta\), there exist \(\alpha \in \mathbb{R}^2\) and \(\phi_\lambda \in C^\infty(\mathbb{T})\) such that
		\[
		\|\phi_{\lambda}\|_{C^1} = (1-\sqrt{\lambda})^2,
		\]
		and \(F_{\lambda,\alpha}^{\phi_{\lambda}}\) admits a unique Lipschitz invariant graph. Moreover, the restriction of \(F_{\lambda,\alpha}^{\phi_{\lambda}}\) to this invariant graph is a circle map of frequency \(\beta\), but the invariant graph possesses non-differentiable points.
	\end{Theorem}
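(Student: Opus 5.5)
We would build the example explicitly and locate the non-differentiable point at a periodic orbit where the linearisation of $F^{\phi}_{\lambda,\alpha}$ becomes parabolic. Write the rational frequency as $\beta=p/q$ and set $a:=(1-\sqrt{\lambda})^2$. We would take
\[
\phi_\lambda(x)=\frac{a}{2\pi q}\sin(2\pi q x),
\]
possibly replaced by a more carefully designed $C^\infty$ bump-type profile if the analysis below requires it. Then $\|\phi_\lambda'\|_{C^0}=a$, $\|\phi_\lambda\|_{C^0}\le a$, and $\phi_\lambda$ has zero average, so $\|\phi_\lambda\|_{C^1}=a$. The first part of Theorem~\ref{M2} (condition \eqref{contrac}) then gives a unique Lipschitz invariant graph $y=u(x)$. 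Eliminating $y'$, the invariance reads
\[
u(x')=\alpha_2+\lambda u(x)+\phi_\lambda(x),\qquad x'=x+\alpha_1-\alpha_2+u(x').
\]
We would choose $\alpha$ so that the restricted circle map has rotation number $\beta$ and $x_0$, a point with $\phi_\lambda'(x_0)=-a$ (for instance $x_0=1/(2q)$), lies on a $q$-periodic orbit of type $p/q$. This should be possible by a continuity/monotonicity argument in $\alpha_1$ with $\alpha_2$ adjusted, or, with symmetric $\phi$, by solving the orbit equations directly.

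At any point with $\phi_\lambda'=-a$, the Jacobian
\[
\begin{pmatrix}1+\phi_\lambda' & \lambda\\ \phi_\lambda' & \lambda\end{pmatrix}
\]
has determinant $\lambda$ and trace $1+\lambda-a=2\sqrt\lambda$. So the eigenvalue $\sqrt\lambda$ is double, and the matrix is a nontrivial Jordan block. Equivalently, differentiating the invariance equation gives a slope cocycle along orbits on the graph:
\[
s(x')=M_x\bigl(s(x)\bigr),\qquad M_x(s)=\frac{\lambda s+\phi_\lambda'(x)}{1+\lambda s+\phi_\lambda'(x)}.
\]
Here $M_x$ is a M\"obius map whose fixed points are real precisely when $\phi_\lambda'(x)\ge -a$, and they coincide, so $M_x$ is parabolic, exactly when $\phi_\lambda'(x)=-a$. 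Away from $x_0$ these maps are hyperbolic, which is the mechanism behind the $C^1$ part of Theorem~\ref{M2}. At the orbit of $x_0$, the composition of the $M_x$ along the periodic orbit should be parabolic (we would try to arrange this via the symmetric choice of $\phi_\lambda$), so it no longer contracts slopes uniformly.

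The key step is to show that the one-sided slopes of $u$ at $x_0$ differ, or that one of them fails to exist. We would compute the graph near the periodic orbit in Jordan coordinates $(\xi,\eta)$. There the linear part is $(\xi,\eta)\mapsto\sqrt\lambda(\xi+\eta,\eta)$, and linear invariant curves have the form $\xi=\eta\bigl(c_\pm+\log|\eta|/\log\sqrt\lambda\bigr)$. The branches approaching from the left and right are determined by the global dynamics: they are the unstable manifolds of the saddle-type periodic points where $\phi_\lambda'=+a$. Because of the logarithmic term, the linear picture alone forces both branches tangent to the eigendirection. The non-differentiability therefore has to come from the nonlinear terms of the map, or from a parabolic orbit that is reached from one side only.

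Our plan is to carry out a normal-form analysis of the parabolic map $M$ near its double fixed slope. We would track the slopes $s_n$ of $u$ at points of the graph converging to $x_0$ from each side, showing that $s_n$ converges to the double slope at the slow rate $\sim 1/n$ from one side. We would then pair this with the quadratic terms of $F^{\phi_\lambda}_{\lambda,\alpha}$ (coming from $\phi_\lambda''$ near $x_0$) and choose the profile of $\phi_\lambda$ near $x_0$ so that these terms push the left and right branches to different limiting slopes, or make the difference quotients oscillate. This step is the main obstacle. The linearisation alone suggests tangency, so the mechanism must be identified precisely, and the freedom in the $C^\infty$ profile of $\phi_\lambda$ (keeping $\|\phi_\lambda\|_{C^1}=a$) is what we would exploit to produce the non-differentiable point.
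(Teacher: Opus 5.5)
\paragraph{The decisive step is missing, and it cannot be supplied.} Your proposal leaves open the one step that matters: actually producing a non-differentiable point. Under your normalisation $|\phi_\lambda'|\le A:=(1-\sqrt\lambda)^2$ this step cannot be completed, because the invariant graph is then necessarily $C^1$. The parabolic mechanism at points where $\phi'=-A$ is therefore a dead end, whatever profile you choose. Here is the argument.

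Put $\zeta=\lambda^{-1/2}-1$. Lemma~\ref{welld} holds at equality, so $\Psi$ is $\zeta$-Lipschitz. Let $D(x)\subseteq[-\zeta,\zeta]$ be the set of limits of difference quotients of $\Psi$ at $x$. Since $F$ is $C^1$, $D(g(x))=M_x(D(x))$ with your M\"obius map $M_x$. For $s\ge-\zeta$ we have $1+\phi'(x)+\lambda s\ge 1-A-\lambda\zeta=\sqrt\lambda$. Hence for $-\zeta\le s_1<s_2\le\zeta$,
\[
M_x(s_2)-M_x(s_1)=\frac{\lambda(s_2-s_1)}{(1+\phi'(x)+\lambda s_1)(1+\phi'(x)+\lambda s_2)}\le\frac{s_2-s_1}{1+\sqrt\lambda\,(s_2-s_1)}.
\]
Let $\ell(x)$ be the diameter of $D(x)$. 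Whenever $\ell(x)>0$ this gives $1/\ell(g(x))\ge 1/\ell(x)+\sqrt\lambda$. Iterating backwards yields $\ell(x)<1/(n\sqrt\lambda)$ for every $n$, so $\ell\equiv0$ and $\Psi$ is differentiable everywhere. The same argument applied to the cluster sets of $\Psi'$ shows $\Psi'$ is continuous.

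Your parabolic points are exactly the equality case: with $t=s+\zeta$ the slope map is $t\mapsto t/(1+\sqrt\lambda\,t)$. Exponential contraction of slopes is lost there, but intervals of slopes still collapse, at rate $1/n$. This is also why your linear Jordan-block picture forced both branches to be tangent, and no nonlinear term can change it.

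\paragraph{How the paper's route differs, and why it does not reach the statement either.} The paper does not prescribe $\phi$. It builds $g$ directly (Proposition~\ref{hf1}), with one-sided derivatives $b<\sqrt\lambda<1<a$ at a $p/q$-periodic orbit, where $ab=\lambda$ and $a+\lambda/a=1+\lambda+A$. It then reads $\phi$ off the Herman--Mather formula. Its corners sit at hyperbolic points where $\phi'=+A$, not at parabolic points where $\phi'=-A$.

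However, the paper only proves $\sup G'=1+\lambda+A$, that is $\sup\phi_\lambda'=A$. It never checks $\inf\phi_\lambda'\ge -A$, and that bound fails:
\begin{itemize}
\item By \eqref{gdiff}, $\phi'\ge -A$ would give $g'(g(y))+\lambda/g'(y)\ge 2\sqrt\lambda$.
\item Then $g'>\sqrt\lambda$ at $y$ would propagate along the whole forward orbit of $y$.
\item But orbits starting just to the right of $y_k$, where $g'\approx a$, have normalised coordinates $h^n(u_0)\uparrow 1$. Along them $g'\to b<\sqrt\lambda$, a contradiction.
\end{itemize}
So the paper's $\phi_\lambda$ has $\|\phi_\lambda\|_{C^1}>(1-\sqrt\lambda)^2$. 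This agrees with the rigidity argument above. With the equality $\|\phi_\lambda\|_{C^1}=(1-\sqrt\lambda)^2$ the statement is false. What the paper's construction actually delivers is the one-sided version $\sup\phi_\lambda'=(1-\sqrt\lambda)^2$.
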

	The proof of Theorem~\ref{Th1} is constructive. The construction is based on the theory of circle maps \cite{H79} and the dissipative versions of the Herman-Mather formula and its derivation (see \eqref{eq:main} and (A') below), and draws on ideas from the constructions of Herman \cite{H83} and Arnaud \cite{Ar11} in the area-preserving twist map setting. The difference here is that we require not only that the invariant graph possesses non-differentiable points, but also that the \(C^1\)-norm of the perturbation has a prescribed a priori magnitude. Compared with the conservative case, this imposes additional constraints.
	
	Combining Theorem \ref{M2} and Theorem~\ref{Th1}, one knows that the threshold \((1-\sqrt{\lambda})^2\) for the \(C^1\)-norm of the perturbation is sharp: there exists a \(C^\infty\) perturbation \(\phi_{\lambda}\) with \(\|\phi_{\lambda}\|_{C^1} = (1-\sqrt{\lambda})^2\) such that the map preserves a unique invariant graph, yet this graph possesses non-differentiable points. On the other hand, whenever \(\|\phi\|_{C^1} < (1-\sqrt{\lambda})^2\), the \(C^1\) normally hyperbolic invariant manifold persists.
	
	\begin{Remark}\label{R2}
		Fix \(\lambda \in (0,1)\). We define the following sets:
		\[
		\begin{aligned}
			A &:= \bigl\{ \phi \in C^1(\mathbb{T}) \;\big|\; \phi \text{ satisfies } \eqref{m22con} \bigr\}, \\
			B &:= \bigl\{ \phi \in C^1(\mathbb{T}) \;\big|\; \exists\,  \alpha \in \mathbb{R}^2 \text{ s.t. the invariant graph } \Gamma \text{ of } F_{\lambda,\alpha}^\phi \text{ is } 1\text{-normally hyperbolic} \bigr\}, \\
			C &:= \bigl\{ \phi \in C^1(\mathbb{T}) \;\big|\; \exists\, \alpha \in \mathbb{R}^2 \text{ s.t. } F_{\lambda,\alpha}^\phi \text{ admits a } C^1 \text{ invariant graph } \Gamma \bigr\}.
		\end{aligned}
		\]
		The relationship among the sets \(A, B, C\) and the perturbation \(\phi\) constructed in Theorem~\ref{Th1} is illustrated in Fig.~\ref{X}. For the detailed verification, see Section~\ref{R3}.
		
		\begin{figure}[htbp]
			\small \centering
			\includegraphics[width=5.5cm]{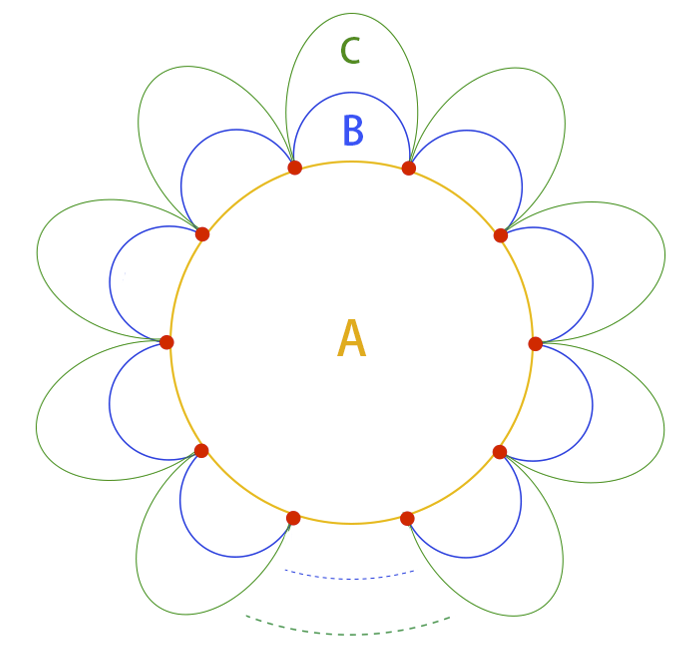}
			\caption{Relationship among the sets \(A, B, C\)}
			\label{X}
		\end{figure}
		
		The region enclosed by the orange curve represents the set \(A\), the region enclosed by the blue curve represents the set \(B\), and the region enclosed by the green curve represents the set \(C\). The red points on the boundary of \(A\) correspond to the perturbations \(\phi_\lambda\) constructed in Theorem~\ref{Th1}. Note that Theorem~\ref{Th1} shows that such red points are at least countably infinite on the boundary of \(A\).
	\end{Remark}

	\begin{Remark}
		In \cite[Page 1]{Man78}, Ma\~{n}\'{e} introduced the ``persistent'' condition. To give a concrete definition of the ``persistent'' condition in our setting, we pass from the universal covering space back to the cylinder. Let \(\mathcal M = \mathbb{T} \times \mathbb{R}\), and let \(\mathrm{Diff}^1(\mathcal M)\) denote the set of diffeomorphisms from \(\mathcal M\) to itself. Denote by \(\gamma\) and \(f\) the projections of \(\Gamma\) and \(F_{\lambda,\alpha}^\phi\) onto \(\mathcal M\), respectively. The invariant graph \(\gamma\) is called {persistent} if there exist a neighborhood \(\mathcal{U}\) of \(\gamma\) in \(\mathcal M\) and a neighborhood \(\mathcal{V}\) of \(f\) in \(\mathrm{Diff}^1(\mathcal M)\) such that
		\begin{enumerate}
			\item[(a)] for every \(\tilde{f} \in \mathcal{V}\), the set \(\gamma_{\tilde{f}} = \bigcap_{n \in \mathbb{Z}} \tilde{f}^n(\mathcal{U})\) is a \(C^1\) submanifold of \(\mathcal M\) and \(\gamma_f = \gamma\);
			\item[(b)] \(\gamma_{\tilde{f}}\) is \(C^1\) close to \(\gamma\) whenever \(\tilde{f}\) is \(C^1\) close to \(f\).
		\end{enumerate}
		In our setting, we say that \(\Gamma\) is persistent if its projection \(\gamma\) is persistent. Moreover, we may define
		\[
		B' := \bigl\{ \phi \in C^1(\mathbb{T}) \;\big|\; \exists\, \alpha \in \mathbb{R}^2 \text{ such that the invariant graph } \Gamma \text{ of } F_{\lambda,\alpha}^\phi \text{ is persistent} \bigr\}.
		\]
		According to \cite{Man78} and the NHIM theorem, the set \(B'\) coincides with the set \(B\) defined in Remark~\ref{R2}.
	\end{Remark}

	\subsection{A key ingredient: the Herman-Mather formula}
	Following Herman \cite{H79}, we denote by \(\mathrm{Diff}^r_+(\mathbb{R})\) (resp. \(\mathrm{Diff}^r_+(\mathbb{T})\)) the group of orientation-preserving \(C^r\) diffeomorphisms on \(\mathbb{R}\) (resp. \(\mathbb{T}\)), where \(r \in [0,+\infty) \cup \{+\infty\} \cup \{\omega\}\). The universal covering space of \(\mathrm{Diff}^r_+(\mathbb{T})\) can be represented as
	\[
	D^r(\mathbb{T}) := \{f \in \mathrm{Diff}^r_+(\mathbb{R}) \mid f - \mathrm{Id} \in C^r(\mathbb{T})\}.
	\]
	For any \(f \in D^r(\mathbb{T})\), the rotation number \(\rho(f)\) is well-defined. By \cite[Proposition 2.1]{SoW},
	the map \(F_{\lambda,\alpha}^{\phi_{\lambda}}\) admits an invariant graph \(\Gamma := \{(x, \Psi_{\lambda,\alpha}(x)) \mid x \in \mathbb{R}\}\) if and only if there exists \(g_{\lambda,\alpha} \in D^r(\mathbb{T})\) ($r\geq 0$) satisfying the following equation
	\begin{equation}\tag{A}\label{eq:main}
		g_{\lambda,\alpha}(x) + \lambda g_{\lambda,\alpha}^{-1}(x) = (1+\lambda)x + (1-\lambda)\alpha_1 + \lambda\alpha_2 + \phi_{\lambda}(x) \quad \forall x \in \mathbb{R}.
	\end{equation}
	Note that
	\[
	g_{\lambda,\alpha}(x) = x + \alpha_1 + \lambda \Psi_{\lambda,\alpha}(x) + \phi_\lambda(x).
	\]
	For $\lambda = 1$, formula (A) was given by Herman in \cite[Chapter II]{H83}, based on which he conducted a detailed study of the dynamics of area-preserving twist maps. Meanwhile, Mather established the same result in \cite{M84} independently. Recently, \cite{SoW} extended this formula to the dissipative setting. Henceforth, we shall refer to formula (A) as the {\it Herman--Mather formula}.

	If \(g_{\lambda,\alpha}\) is differentiable at \(x\), differentiating both sides of \eqref{eq:main} with respect to \(x\) yields
	\begin{equation}\tag{A'}\label{gdiff}
		g'_{\lambda,\alpha}(x) + \frac{\lambda}{g'_{\lambda,\alpha}(g_{\lambda,\alpha}^{-1}(x))} = 1+\lambda + \phi_\lambda'(x).
	\end{equation}
	We shall call formula (A') the {\it derived Herman--Mather formula}. Essentially, the Herman--Mather formula is derived from the invariance of the graph; its significance lies in the fact that {\it it directly relates the perturbation of the map to the dynamics of the map restricted to the invariant graph, while the invariant graph itself does not appear explicitly in the formula.}
	
	Most of the proofs and constructions in this paper rely closely on this formula. In particular, it is used in the verification of the normal hyperbolicity of the invariant graph in the proof of Theorem 1 (Lemma~\ref{lem:universal-g-lower}), in the construction of perturbations in Theorem 2, and in the proof of the strict inclusion relations between the sets in Remark~\ref{R2}.

	\vspace{1ex}
	\noindent\textbf{Notation convention.} For notational simplicity, we shall omit the subscripts and superscripts of the relevant notations whenever no confusion arises. For instance, we write \(F\), \(\phi\), \(\Psi\), \(g\), etc., instead of the more cumbersome \(F_{\lambda,\alpha}^{\phi^\lambda}\), \(\phi^\lambda\), \(\Psi_{\lambda,\alpha}\), \(g_{\lambda,\alpha}\), and so on. The dependence of these quantities on \(\lambda\) and \(\alpha\) should be clear from the context.
	
	\section{\sc Proof of Theorem \ref{M2}}
	\subsection{Persistence of Lipschitz invariant graph}
	
	\subsubsection{Construction of the graph transform}
	Fix $\la\in (0,1)$. Let \begin{equation}\label{iI}
		I := \left[-\frac{\al_2}{1-\la}-(1-\lambda), \frac{\al_2}{1-\la}+(1-\lambda)\right]
	\end{equation}and denote by $\Lip(\T, I)$ the space of 1-periodic Lipschitz maps $\Psi: \R \to I$ with Lipschitz constant $\mathcal{L}(\Psi)$. For $K > 0$, define
	\[
	\Lip_K := \{\Psi \in \Lip(\T, I) \mid \mathcal{L}(\Psi) \leq K\}.
	\]
	
	Consider the map ${F_{\lambda,\alpha}^\phi}(x,y) = (X(x,y), Y(x,y))$ where
	\begin{align*}
		X(x,y) &:= x + \alpha_1 + \lambda y + \phi(x), \\
		Y(x,y) &:= \al_2+\lambda y + \phi(x).
	\end{align*}
	For any $\psi \in \Lip_K$, we have the compositions
	\begin{align*}
		X \circ (\mathrm{Id}, \Psi)(x) &= x + \alpha_1 + \lambda \Psi(x) + \phi(x), \\
		Y \circ (\mathrm{Id}, \Psi)(x) &= \al_2+\lambda \Psi(x) + \phi(x).
	\end{align*}
	Let $A_\lambda := \|\phi\|_{\mathrm{Lip}}$. It follows from $\int_\T\phi(x)dx=0$ that $\|\phi\|_{C^0}\leq A_\lambda$.
	
	\begin{Lemma}\label{invertib}
		Define the constant
		\[
		K_1 := \frac{2}{\sqrt{\lambda}} - 1.
		\]
		If $K < K_1$, then for each $\Psi \in \Lip_K$, the map $X \circ (\mathrm{Id}, \Psi)$ is invertible with Lipschitz inverse satisfying
		\[
		\mathcal{L}\left([X \circ (\mathrm{Id}, \Psi)]^{-1}\right) \leq \frac{1}{1 - \lambda K - A_\lambda}.
		\]
	\end{Lemma}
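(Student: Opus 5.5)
The plan is to show that $h:=X\circ(\mathrm{Id},\Psi)$, namely $h(x)=x+\alpha_1+\lambda\Psi(x)+\phi(x)$, is a strictly increasing bi-Lipschitz homeomorphism of $\R$ whose increments are bounded below by a fixed multiple of the increments of $x$. Invertibility and the Lipschitz bound on the inverse then follow at once.

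\emph{Step 1.} For $x<x'$ I will estimate
\[
h(x')-h(x)=(x'-x)+\lambda\bigl(\Psi(x')-\Psi(x)\bigr)+\bigl(\phi(x')-\phi(x)\bigr)\ \ge\ (1-\lambda K-A_\lambda)(x'-x),
\]
using $\mathcal L(\Psi)\le K$ and $\mathcal L(\phi)=A_\lambda$.

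\emph{Step 2.} I will check that the constant $c:=1-\lambda K-A_\lambda$ is positive. This is the only place the hypothesis $K<K_1$ enters, and it is the main point to verify. The working assumption in this section is \eqref{contrac}, i.e.\ $A_\lambda\le(1-\sqrt\lambda)^2$, so that assumption must be used here. From $K<K_1$ we get
\[
\lambda K<\lambda\Bigl(\tfrac{2}{\sqrt\lambda}-1\Bigr)=2\sqrt\lambda-\lambda .
\]
Hence
\[
c>1-2\sqrt\lambda+\lambda-A_\lambda=(1-\sqrt\lambda)^2-A_\lambda\ge 0 .
\]
So $c>0$ strictly, and $K_1$ is exactly the threshold that makes this work. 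If the lemma were meant without \eqref{contrac}, one would instead need $A_\lambda<(1-\sqrt\lambda)^2$ or the explicit condition $\lambda K+A_\lambda<1$. I will state that \eqref{contrac} is in force.

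\emph{Step 3.} From Step 1, $h$ is strictly increasing and injective. Since $\Psi$ and $\phi$ are bounded and $1$-periodic, $h(x)-x$ is bounded, so $h(x)\to\pm\infty$ as $x\to\pm\infty$. Together with continuity, this makes $h$ a surjection, hence a homeomorphism of $\R$.

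\emph{Step 4.} Applying Step 1 to $x=h^{-1}(u)$ and $x'=h^{-1}(u')$ gives
\[
|h^{-1}(u')-h^{-1}(u)|\le \frac{1}{c}\,|u'-u|,
\]
which is the claimed bound on $\mathcal L\bigl([X\circ(\mathrm{Id},\Psi)]^{-1}\bigr)$. I also note that $h-\mathrm{Id}$ is $1$-periodic, so $h^{-1}-\mathrm{Id}$ is $1$-periodic as well; this will be needed later for the graph transform. I expect no genuine obstacle beyond the positivity computation in Step 2.
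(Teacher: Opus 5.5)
Your proposal is correct and follows the paper's route. Both arguments reduce the claim to $\lambda K + A_\lambda < 1$, using $K<K_1$ together with \eqref{contrac}; that is, $X\circ(\mathrm{Id},\Psi)-\mathrm{Id}$ has Lipschitz constant below $1$, which gives invertibility and the bound $1/(1-\lambda K - A_\lambda)$. Your monotonicity and surjectivity argument just spells out the ``identity plus contraction'' step that the paper leaves implicit. Your version is also slightly more careful: you take the strict inequality from $K<K_1$ while using only $A_\lambda\le(1-\sqrt\lambda)^2$, whereas the paper's chain writes $A_\lambda<(1-\sqrt\lambda)^2$.
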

	
	\begin{proof}
		Define $\tilde{X}(x) := X \circ (\mathrm{Id}, \Psi)(x) - x=\alpha_1+\lambda\Psi(x)+\phi(x)$. Then $\tilde{X}$ is Lipschitz with
		\[
		|\tilde{X}(x_1) - \tilde{X}(x_2)| \leq (\lambda K + A_\lambda)|x_1 - x_2|.
		\]
		The condition $K < K_1$ implies $\lambda K + (1 - \sqrt{\lambda})^2 < 1$, and consequently
		\[
		\mathcal{L}(\tilde{X}) \leq \lambda|\Psi(x_1)-\Psi(x_2)|+|\phi(x_1)-\phi(x_2)|\leq \lambda K + A_\lambda < \lambda K + (1 - \sqrt{\lambda})^2 < 1.
		\]
		Therefore, the map $X\circ(\mathrm{Id},\Psi)$ is invertible and the Lipschitz constant of $[X\circ(\mathrm{Id},\Psi)]^{-1}$ satisfies:
		\[
		\mathcal{L}([X\circ(\mathrm{Id},\Psi)]^{-1})\leq \frac{1}{1-\lambda K -A_{\lambda}}
		\]
	\end{proof}
	
	When $[X \circ (\mathrm{Id}, \Psi)]^{-1}$ exists, we define the graph transform $\mathcal{T}: \Lip_K \to \Lip_K$ by
	\[
	\mathcal{T}\Psi : x \mapsto Y \circ (\mathrm{Id}, \Psi) \circ [X \circ (\mathrm{Id}, \Psi)]^{-1}(x).
	\]
	For the graph $\tilde{\Gamma}(\Psi) := \{(x, \Psi(x)) \mid x \in \R\}$, we have the invariance property
	\[
	\tilde{\Gamma}(\mathcal{T}\Psi) = {F_{\lambda,\alpha}^\phi}(\tilde{\Gamma}(\Psi)).
	\]
	
	\begin{Lemma}\label{welld}
		Define the constant
		\[
		K_2 := \frac{1}{\sqrt{\lambda}} - 1.
		\]
		The graph transform $\mathcal{T}: \Lip_{K_2} \to \Lip_{K_2}$ is well-defined, i.e., $\mathcal{T}\Psi \in \Lip_{K_2}$ for all $\Psi \in \Lip_{K_2}$.
	\end{Lemma}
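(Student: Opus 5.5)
Since $K_2 = \frac{1}{\sqrt\lambda}-1 < \frac{2}{\sqrt\lambda}-1 = K_1$, Lemma~\ref{invertib} applies to every $\Psi\in\Lip_{K_2}$. Hence $h:=X\circ(\mathrm{Id},\Psi)$ is a homeomorphism of $\R$ with $\mathcal{L}(h^{-1})\le (1-\lambda K_2-A_\lambda)^{-1}$, and $\mathcal{T}\Psi$ is well defined. It remains to check three things: that $\mathcal{T}\Psi$ is $1$-periodic, that it takes values in $I$, and that $\mathcal{L}(\mathcal{T}\Psi)\le K_2$.

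\emph{Periodicity.} Since $\Psi$ and $\phi$ are $1$-periodic, $h(x+1)=h(x)+1$, so $h^{-1}(x+1)=h^{-1}(x)+1$. Also $Y\circ(\mathrm{Id},\Psi)$ is $1$-periodic, so the composition $\mathcal{T}\Psi$ is $1$-periodic.

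\emph{Range.} Here $\mathcal{T}\Psi(x)=\alpha_2+\lambda\Psi(u)+\phi(u)$ with $u=h^{-1}(x)$. Using $\|\phi\|_{C^0}\le A_\lambda\le(1-\sqrt\lambda)^2$, I would check that the interval $I$ in \eqref{iI} is mapped into itself by $y\mapsto \alpha_2+\lambda y+\phi$. Its half-width $c$ must satisfy $\alpha_2/(1-\lambda)$-centred containment, i.e.\ $\lambda c + (1-\sqrt\lambda)^2\le c$, which amounts to $(1-\sqrt\lambda)^2\le (1-\lambda)^2$. This holds since $1-\sqrt\lambda\le 1-\lambda$ fails in general, but $(1-\sqrt\lambda)\le(1-\sqrt\lambda)(1+\sqrt\lambda)$ holds. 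The interval as literally written in \eqref{iI} is symmetric about $0$ rather than about $\alpha_2/(1-\lambda)$; I would interpret it as the interval centred at $\alpha_2/(1-\lambda)$ with half-width $1-\lambda$, which is presumably what is intended.

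\emph{Lipschitz bound.} This is the main step. For $x_1,x_2$, set $u_i=h^{-1}(x_i)$. Then
\[
|\mathcal{T}\Psi(x_1)-\mathcal{T}\Psi(x_2)|\le(\lambda K_2+A_\lambda)|u_1-u_2|\le\frac{\lambda K_2+A_\lambda}{1-\lambda K_2-A_\lambda}|x_1-x_2|.
\]
The function $t\mapsto t/(1-t)$ is increasing, and $A_\lambda\le(1-\sqrt\lambda)^2$. With $\lambda K_2=\sqrt\lambda-\lambda$ we get
\[
\lambda K_2+(1-\sqrt\lambda)^2=\sqrt\lambda-\lambda+1-2\sqrt\lambda+\lambda=1-\sqrt\lambda.
\]
The ratio is therefore at most $\frac{1-\sqrt\lambda}{\sqrt\lambda}=K_2$, as required.

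The cruder bound just used ignores cancellation and suffices. The real content is that $K_2$ is exactly the fixed point of $K\mapsto\frac{\lambda K+A}{1-\lambda K-A}$ at $A=(1-\sqrt\lambda)^2$, which is why the threshold appears. The only delicate point I anticipate is the range bookkeeping for $I$, not the Lipschitz estimate.
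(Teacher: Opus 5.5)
Your proposal is correct and follows the paper's proof: a range check on $I$, which reduces to $(1-\sqrt\lambda)^2\le(1-\lambda)^2$, then the Lipschitz estimate via Lemma~\ref{invertib} and the identity $\lambda K_2+(1-\sqrt\lambda)^2=1-\sqrt\lambda$; your periodicity check and your centring of $I$ at $\alpha_2/(1-\lambda)$ are small improvements, since the paper's sup-norm bound on the literal, $0$-centred $I$ tacitly treats $\alpha_2$ as nonnegative. One slip to fix: $1-\sqrt\lambda\le 1-\lambda$ does not fail in general. It is equivalent to $\lambda\le\sqrt\lambda$, which holds on $(0,1)$, and it is the same inequality as $(1-\sqrt\lambda)\le(1-\sqrt\lambda)(1+\sqrt\lambda)$, so that clause should be deleted.
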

	
	\begin{proof}
		For $\Psi \in \Lip_{K_2}$, we first establish the uniform bound:
		\[
		\|\mathcal{T}\Psi\|_{C^0} \leq \al_2+\lambda \|\Psi\|_{C^0} + A_\lambda \leq \al_2+\frac{\la\al_2}{1-\la}+\lambda(1-\lambda) + (1 - \sqrt{\lambda})^2 <\frac{\al_2}{1-\la}+ 1-\lambda.
		\]
		
		The Lipschitz estimate follows from:
		\begin{align*}
			|\mathcal{T}\Psi(x_1) - \mathcal{T}\Psi(x_2)|
			&\leq \frac{\lambda \mathcal{L}(\Psi) + A_\lambda}{1 - \lambda \mathcal{L}(\Psi) - A_\lambda}|x_1 - x_2| \\
			&< \frac{\lambda K_2 + (1 - \sqrt{\lambda})^2}{1 - \lambda K_2 - (1 - \sqrt{\lambda})^2}|x_1 - x_2| \\
			&= K_2 |x_1 - x_2|,
		\end{align*}
		the first inequality above is obtained from the definition of $\mathcal{T}$ and Lemma~\ref{invertib}.
		
		Thus $\mathcal{T}\Psi \in \Lip_{K_2}$.
	\end{proof}

	\subsubsection{Contraction mapping}
	Note that $\Lip_K$ is a closed subspace of the Banach space $C^0(\T,I)$ equipped with the $C^0$-metric, and hence is complete. To complete the proof of Theorem~\ref{M2}, it remains to show that the graph transform is a contraction mapping. Assuming the invertibility of $X \circ (\Id,\psi)$ and the well-definedness of $\mathcal{T}(\Lip_K\to \Lip_K)$, we establish this through the following lemma.
	
	\begin{Lemma}\label{ctrmap}
		Define the constant
		\[
		K_3 := \frac{1}{\lambda} - 1.
		\]
		If $K < K_3$, then the graph transform $\mathcal{T}: \Lip_K \to \Lip_K$ is a contraction. Specifically, for any $\Psi_1, \Psi_2 \in \Lip_K$,
		\[
		\|\mathcal{T}\Psi_1 - \mathcal{T}\Psi_2\|_{C^0} \leq l \|\Psi_1 - \Psi_2\|_{C^0},
		\]
		where $0 < l < 1$.
	\end{Lemma}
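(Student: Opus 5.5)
The approach is to compare the two graph transforms pointwise through the preimages under the horizontal maps $h_i := X\circ(\Id,\Psi_i)$, $i=1,2$. These are invertible by Lemma~\ref{invertib}, which the statement allows us to assume. Fix $x\in\R$ and set $u_i := h_i^{-1}(x)$, so that
\[
x = u_i + \alpha_1 + \lambda\Psi_i(u_i) + \phi(u_i), \qquad \mathcal{T}\Psi_i(x) = \alpha_2 + \lambda\Psi_i(u_i) + \phi(u_i).
\]
Subtracting the first identity from the second gives the key simplification
\[
\mathcal{T}\Psi_i(x) = x - u_i - \alpha_1 + \alpha_2, \qquad\text{hence}\qquad \mathcal{T}\Psi_1(x)-\mathcal{T}\Psi_2(x) = u_2-u_1 .
\]
So it suffices to bound $|u_1-u_2|$ uniformly in $x$ by $l\,\|\Psi_1-\Psi_2\|_{C^0}$.

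To do this I use $h_1(u_1)=x=h_2(u_2)$ and write
\[
h_1(u_1)-h_1(u_2) = h_2(u_2)-h_1(u_2) = \lambda\bigl(\Psi_2(u_2)-\Psi_1(u_2)\bigr).
\]
The right-hand side is bounded by $\lambda\|\Psi_1-\Psi_2\|_{C^0}$. On the left, Lemma~\ref{invertib} (applied to $\Psi_1\in\Lip_K$) says $h_1^{-1}$ is Lipschitz with constant at most $(1-\lambda K-A_\lambda)^{-1}$, so $|u_1-u_2|\le (1-\lambda K-A_\lambda)^{-1}|h_1(u_1)-h_1(u_2)|$. Equivalently, $h_1-\Id$ has Lipschitz constant at most $\lambda K+A_\lambda$. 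Combining,
\[
\|\mathcal{T}\Psi_1-\mathcal{T}\Psi_2\|_{C^0} \le l\,\|\Psi_1-\Psi_2\|_{C^0}, \qquad l := \frac{\lambda}{1-\lambda K - A_\lambda}.
\]

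The remaining step, and the one I expect to be the real obstacle, is checking $0<l<1$. Positivity is immediate from Lemma~\ref{invertib}. The inequality $l<1$ is equivalent to
\[
\lambda K + A_\lambda < 1-\lambda .
\]
The hypothesis $K<K_3$ is exactly $\lambda K<1-\lambda$, so it is precisely the condition when $A_\lambda$ is neglected. With $A_\lambda>0$ one needs the slack $1-\lambda-\lambda K$ to exceed $A_\lambda$. I do not see a route that avoids the $\phi$-contribution, since $h_1$ is a general bi-Lipschitz map whose lower slope is only controlled by $1-\lambda K-A_\lambda$.

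I would therefore close the argument using the standing assumptions under which the lemma is applied, namely that $\mathcal{T}$ maps $\Lip_K$ into itself and $A_\lambda\le(1-\sqrt\lambda)^2$. For the space actually used, $K=K_2<K_3$, one gets
\[
\lambda K_2 + A_\lambda \le (\sqrt\lambda-\lambda) + (1-2\sqrt\lambda+\lambda) = 1-\sqrt\lambda < 1-\lambda ,
\]
so $l\le \lambda/\sqrt\lambda=\sqrt\lambda<1$. For a general $K<K_3$ the same computation gives $l<1$ exactly when $A_\lambda<1-\lambda-\lambda K$. I would state this explicitly as the precise form in which the hypothesis $K<K_3$ enters, rather than claim contraction from $K<K_3$ alone.

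Finally, Banach's fixed point theorem on the complete space $\Lip_{K_2}$ yields the unique Lipschitz invariant graph.
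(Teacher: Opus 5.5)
Your reduction $\mathcal{T}\Psi_1(x)-\mathcal{T}\Psi_2(x)=u_2-u_1$ is correct. However, the proposal does not prove the lemma as stated. You get contraction only under the extra condition $\lambda K+A_\lambda<1-\lambda$ (e.g.\ for $K=K_2$), and you explicitly decline to derive it from $K<K_3$.

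The gap is where you bound $\mathcal{L}(h_1^{-1})$ by $(1-\lambda K-A_\lambda)^{-1}$ via Lemma~\ref{invertib}. The statement presupposes that $\mathcal{T}$ maps $\Lip_K$ into $\Lip_K$; the paper says so in the sentence preceding the lemma. So $\mathcal{T}\Psi_1$ is $K$-Lipschitz. Your own identity gives $h_1^{-1}=\Id-\mathcal{T}\Psi_1+(\alpha_2-\alpha_1)$, hence $\mathcal{L}(h_1^{-1})\le 1+K$ with no $\phi$-contribution. Then
\[
|u_1-u_2|=\bigl|h_1^{-1}(x)-h_1^{-1}(h_1(u_2))\bigr|\le(1+K)\bigl|h_2(u_2)-h_1(u_2)\bigr|\le\lambda(1+K)\|\Psi_1-\Psi_2\|_{C^0},
\]
so $l=\lambda(1+K)$, which is $<1$ exactly when $K<K_3$. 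That is where the threshold $K_3$ comes from: the effect of $\phi$ is already absorbed into the self-map hypothesis.

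Your route could not cover the full range anyway. The quantitative bound of Lemma~\ref{invertib} is only available for $K<K_1$, and $K_1<K_3$ when $\lambda<1/4$.

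The paper's proof is this same estimate in different notation. With $x=h_1^{-1}(z)$, it splits $\mathcal{T}\Psi_1(z)-\mathcal{T}\Psi_2(z)$ into two brackets:
\[
\bigl[Y(x,\Psi_1(x))-Y(x,\Psi_2(x))\bigr]+\bigl[\mathcal{T}\Psi_2(h_2(x))-\mathcal{T}\Psi_2(h_1(x))\bigr].
\]
The first bracket is at most $\lambda|\Psi_1(x)-\Psi_2(x)|$. The second is at most $K\lambda|\Psi_1(x)-\Psi_2(x)|$, using that $\mathcal{T}\Psi_2\in\Lip_K$ and $|h_2(x)-h_1(x)|=\lambda|\Psi_2(x)-\Psi_1(x)|$.

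In the case the paper actually uses ($K=K_2$, $A_\lambda\le(1-\sqrt\lambda)^2$), your specialized argument is valid. It gives the same constant $\sqrt\lambda=\lambda(1+K_2)$, so the fixed-point conclusion downstream is unaffected. More generally, whenever the self-map property comes from the crude estimate in Lemma~\ref{welld}, one has $(1-\lambda K-A_\lambda)^{-1}\le 1+K$, so your constant is no worse than the paper's. But you would need to say this. As written, the proposal proves a weaker lemma than the one stated.
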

	
	\begin{proof}
		Fix $z \in \T$ and $\Psi_1, \Psi_2 \in \Lip_K$. Let $(x,y)$ be the point on the graph of $\Psi_1$ determined by
		\[
		x := [X \circ (\Id, \Psi_1)]^{-1}(z), \quad y := \Psi_1(x).
		\]
		
		By definition of the graph transform, we have:
		\begin{align*}
			&\mathcal{T}\Psi_1(z) = Y\circ(\Id, \Psi_1)(x), \\
			&\mathcal{T}\Psi_2(z) = \mathcal{T}\Psi_2 \circ X\circ(\Id, \Psi_1)(x),\\
			&\mathcal{T}\Psi_2 \circ X(x, \Psi_2(x)) =Y\circ(\Id, \Psi_2)(x).
		\end{align*}
		It follows that
		\[|Y\circ(\Id, \Psi_2)(x) - \mathcal{T}\Psi_2 \circ X\circ(\Id, \Psi_1)(x)|\leq K\lambda |\Psi_1(x) - \Psi_2(x)|.\]
		Moreover, we have
		\begin{align*}
			|\mathcal{T}\Psi_1(z) - \mathcal{T}\Psi_2(z)|
			&\leq |Y\circ(\Id, \Psi_1)(x) - Y\circ(\Id, \Psi_2)(x)| \\
			&\quad + |Y\circ(\Id, \Psi_2)(x) - \mathcal{T}\Psi_2 \circ X\circ(\Id, \Psi_1)(x)| \\
			&\leq \lambda |\Psi_1(x) - \Psi_2(x)| + K\lambda |\Psi_1(x) - \Psi_2(x)| \\
			&= (\lambda + K\lambda) |\Psi_1(x) - \Psi_2(x)|.
		\end{align*}
		This yields the uniform estimate:
		\[
		\|\mathcal{T}\Psi_1 - \mathcal{T}\Psi_2\|_{C^0} \leq l \|\Psi_1 - \Psi_2\|_{C^0},
		\]
		where $l := \lambda(1 + K)$. When $K < K_3 = \frac{1}{\lambda} - 1$, we have $l < 1$, proving that $\mathcal{T}$ is indeed a contraction.
	\end{proof}

	Note that for each $\lambda\in (0,1)$, we have
	\[K_2<\min\{K_1,K_3\}.\]
	It follows that the map $\mathcal{T} \colon \mathrm{Lip}_{K_2} \to \mathrm{Lip}_{K_2}$ admits a unique fixed point $\tilde{\Psi}$. Moreover, the graph
	\begin{equation}\label{tGp}
		\tilde{\Gamma}(\tilde{\Psi}) := \bigl\{ \bigl(x, \tilde \Psi(x)\bigr) \bigm| x \in \mathbb{R} \bigr\}
	\end{equation}
	is the unique Lipschitz invariant graph for ${F_{\lambda,\alpha}^\phi}$.

	\subsection{Persistence of $C^1$ invariant graph}
	The proof  is inspired by  \cite[Proof of Theorem 3.1]{BB}). We proceed as follows.
	\subsubsection{The cone condition}
	Denote
	\[\zeta:=\frac{1}{\sqrt{\lambda}}-1.\]
	We denote $F$ for ${F_{\lambda,\alpha}^\phi}$, $\Psi$ for $\tilde{\Psi}$, and we use $\Gamma$ instead of $\tilde{\Gamma}$ to denote the invariant graph by $F$  for simplicity.
	For $z\in\R^2$, we consider the cone  as follows:
	\[\mathcal{C}_\zeta(z):=\{v=(v_1,v_2)\in\R^2\ |\ |v_2|\leq \zeta |v_1|\},\]
	where we identified the Euclidean space $\R^2$ and its tangent space $\mathrm{T}_z\R^2$. The inner product in $\R^2$ is given by the standard one. Recall the projection $\pi_1:\R^2\to\R$ via
	\[\pi_1:(x,\Psi(x))\mapsto x,\]
	which means $\pi_1^{-1}(x)\in \Gamma$ for each $x\in\R$. Moreover, we denote the cone along $\Gamma$ by $\mathcal{C}_\zeta(x)$ instead of  $\mathcal{C}_\zeta(\pi_1^{-1}(x))$ for simplicity. Since the invariant graph $\Gamma$ is {\it a priori} only Lipschitz, we also need to consider the (Bouligand) tangent cone along $\Gamma=\{(x,\Psi(x))\ |\ x\in\R\}$.
	\[TC_\Gamma(x):=\left\{v\in\R^2\ \left|\ v=\mu\lim_{n\to+\infty}\frac{(x_n,\Psi(x_n))^T-(x,\Psi(x))^T}{\|(x_n,\Psi(x_n))^T-(x,\Psi(x))^T\|}\right.,\quad \forall \mu\in\R, x_n\to x \in \R\right\},\]
	where  $a^T$ denotes the transpose of $a$. Let us recall that there exists $g\in D^0(\T)$ such that
	\[F(x,\Psi(x))=(g(x),\Psi(g(x))).\]
	A direct calculation implies that for each $v\in TC_\Gamma(x)$, there exists $\mu\in\R$ such that
	\[DF(\pi_1^{-1}(x))v=\mu\lim_{n\to+\infty}\frac{(g(x_n),\Psi(g(x_n)))^T-(g(x),\Psi(g(x)))^T}{\|(g(x_n),\Psi(g(x_n)))^T-(g(x),\Psi(g(x)))^T\|}.\]
	Note that $\Gamma$ is invariant by $F$. Then its tangent cone is also invariant by the tangent map $DF$.
	It follows that
	\begin{equation}\label{cooncd}
		DF(\pi_1^{-1}(x))\cdot\left(TC_\Gamma(x)\right):=\left\{DF(\pi_1^{-1}(x))v\ |\ v\in TC_\Gamma(x)\right\}= TC_\Gamma(g(x)),
	\end{equation}
	
	Next, we need to verify the following two conditions:
	\begin{itemize}
		\item [(I)] $TC_\Gamma(x)\subseteq \mathcal{C}_\zeta(x)$ for each $x\in \R$;
		\item [(II)] $DF(\pi_1^{-1}(x))\cdot(\mathcal{C}_\zeta(x))\subseteq \mathrm{interior}\ \mathcal{C}_\zeta(x)\cup \{(0,0)\}$  for each $x\in \R$.
	\end{itemize}
	By Lemma \ref{welld} and (\ref{tGp}), we know that $\mathcal{L}(\Psi)\leq \frac{1}{\sqrt{\lambda}}-1$, which means Item (I) holds.
	For Item (II), it follows directly from the definition of $F$ that for any vector $v = (v_1, v_2) \in \mathcal{C}_\zeta(x)$, we have
	\[
	DF(\pi_1^{-1}(x))v =
	\begin{pmatrix}
		1+\phi'(x) & \lambda \\
		\phi'(x) & \lambda
	\end{pmatrix}
	\begin{pmatrix}
		v_1 \\
		v_2
	\end{pmatrix}
	=
	\begin{pmatrix}
		v_1 + \phi'(x)v_1 + \lambda v_2 \\
		\phi'(x)v_1 + \lambda v_2
	\end{pmatrix}.
	\]
	Note that $\|\phi\|_{C^1} < (1 - \sqrt{\lambda})^2$ and $|v_2 / v_1| \leq \zeta = \frac{1}{\sqrt{\lambda}} - 1$, so $|\phi'(x)+\lambda \frac{v_2}{v_1}|<1-\sqrt{\lambda}<1$. It is straightforward to verify that
	\begin{equation}\label{test2}
		\frac{|\phi'(x)v_1 + \lambda v_2|}{|v_1 + \phi'(x)v_1 + \lambda v_2|}
		= \frac{|\phi'(x) + \lambda \frac{v_2}{v_1}|}{|1 + \phi'(x) + \lambda \frac{v_2}{v_1}|}
		< \zeta,
	\end{equation}
	which confirms the validity of Item (II).

	\subsubsection{Differentiability}
	Let us recall a classical result regarding the geometrical criterion on the differentiability of a Lipschitz submanifold (see \cite[Lemma 4.2]{BB} for instance).
	\begin{Lemma}\label{difli}
		Let \( Z \) be a Lipschitz submanifold of dimension \( n \). If for every \( z \in Z \), the tangent cone \( TC_Z(z) \) is contained in an \( n \)-dimensional space \( L(z) \), then \( Z \) is a differentiable submanifold with \( T_zZ = L(z) \). If moreover \( z \mapsto L(z) \) is continuous, then \( Z \) is of class \( C^1 \).
	\end{Lemma}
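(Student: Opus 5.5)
The plan is to work locally in graph coordinates and to argue by compactness of secant directions. Since \(Z\) is a Lipschitz submanifold of dimension \(n\) in some \(\R^{n+m}\), around any \(z_0\in Z\) we can choose linear coordinates \(\R^{n+m}=\R^n\times\R^m\) and a Lipschitz map \(h:U\subset\R^n\to\R^m\), with Lipschitz constant \(L_h\), such that \(Z\) coincides near \(z_0\) with \(\{(u,h(u))\mid u\in U\}\). Throughout, a point is written \(z=(u,h(u))\). By definition, \(TC_Z(z)\) consists of all multiples of limits of normalized secants \(\bigl((u_k,h(u_k))-(u,h(u))\bigr)/\|\cdot\|\) with \(u_k\to u\).

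\textbf{Step 1: \(L(z)\) is a graph over \(\R^n\).} Fix a unit vector \(e\in\R^n\) and take \(u_k=u+t_ke\) with \(t_k\downarrow 0\). The Lipschitz bound gives
\[
\frac{|h(u+t_ke)-h(u)|}{t_k}\le L_h .
\]
So, after passing to a subsequence, \(\bigl(e,(h(u+t_ke)-h(u))/t_k\bigr)\) converges to some \((e,w_e)\). This vector is a positive multiple of a limit of normalized secants, so \((e,w_e)\in TC_Z(z)\subseteq L(z)\). Hence the projection \(L(z)\to\R^n\) is surjective. Since \(\dim L(z)=n\), this projection is an isomorphism, and \(L(z)=\{(s,A(u)s)\mid s\in\R^n\}\) for a unique linear map \(A(u):\R^n\to\R^m\). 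I expect this transversality step to be the main point; everything else is bookkeeping. It is also where the Lipschitz hypothesis is really used, because it keeps secants uniformly away from the vertical direction \(\{0\}\times\R^m\).

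\textbf{Step 2: differentiability.} Suppose \(h\) is not differentiable at \(u\) with derivative \(A(u)\). Then there are \(\varepsilon>0\) and \(s_k\to0\) with
\[
|h(u+s_k)-h(u)-A(u)s_k|\ge\varepsilon|s_k| .
\]
Divide by \(|s_k|\) and use the Lipschitz bound to extract a subsequence along which \(s_k/|s_k|\to e\) and \((h(u+s_k)-h(u))/|s_k|\to w\). Then \((e,w)\in TC_Z(z)\subseteq L(z)\), so \(w=A(u)e\). On the other hand, passing to the limit in the inequality gives \(|w-A(u)e|\ge\varepsilon\), a contradiction. Therefore \(h\) is differentiable at \(u\) with \(Dh(u)=A(u)\), so \(Z\) is differentiable and \(T_zZ=\mathrm{graph}\,A(u)=L(z)\).

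\textbf{Step 3: \(C^1\) regularity.} Assume now that \(z\mapsto L(z)\) is continuous into the Grassmannian of \(n\)-planes. Near \(L(z_0)\), every plane is transverse to \(\{0\}\times\R^m\), and the map sending such a plane to the linear map whose graph it is gives a chart of the Grassmannian, hence is continuous there. Consequently \(u\mapsto A(u)=Dh(u)\) is continuous, so \(h\) is \(C^1\) and \(Z\) is a \(C^1\) submanifold.
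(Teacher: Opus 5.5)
Your proof is correct. The paper does not prove this lemma itself; it quotes it from \cite[Lemma 4.2]{BB} and uses it as a black box. So there is no in-paper route to compare with, and your argument supplies the missing self-contained proof.

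The structure is sound:
\begin{enumerate}
\item In local Lipschitz-graph coordinates, the difference quotients of \(h\) along every direction are bounded. Hence \(TC_Z(z)\) projects onto \(\R^n\), which forces \(L(z)\) to be the graph of a linear map \(A(u)\).
\item A compactness argument on difference quotients upgrades the pointwise cone inclusion to Fr\'echet differentiability, with \(Dh(u)=A(u)\).
\item Continuity of \(A\) follows from the standard graph chart of the Grassmannian.
\end{enumerate}
Step 1 shows every \(L(z)\) in the chart is transverse to \(\{0\}\times\R^m\). Hence the qualifier ``near \(L(z_0)\)'' in Step 3 is unnecessary.

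The citation keeps the paper short. Your version makes explicit where the Lipschitz hypothesis enters: it keeps secants uniformly away from the vertical and makes the difference quotients precompact. In the paper's application (\(n=m=1\), \(Z=\Gamma=\{(x,\Psi(x))\}\)), your Step 1 is immediate. Step 2 then reduces to saying that every subsequential limit of \((\Psi(x_k)-\Psi(x))/(x_k-x)\) equals the slope of \(L(x)\), which is exactly what the paper needs.

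One caveat about scope: your argument reads ``Lipschitz submanifold'' as ``locally the graph of a Lipschitz map over an \(n\)-plane''. That is the natural reading here, and it covers the paper's use, since \(\Gamma\) is a global Lipschitz graph over \(\R\). Under a definition via bi-Lipschitz parametrizations, however, the existence of graph coordinates in your first paragraph would itself need justification, and it can fail when \(L\) is not continuous.
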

	
	In view of  Item (II) in last subsection, we have (see \cite[Proof of Theorem 1.2]{Ne}) for each $x\in\R$, the set
	\[L(x):=\cap_{k\geq 0}DF^{k}(\pi_1^{-1}(g^{-k}(x)))\cdot (\mathcal{C}_\zeta(g^{-k}(x)))\subseteq \mathcal{C}_\zeta(x)\]
	is a 1-dimensional subspace of $T_{\pi_1^{-1}(x)}\R^2$.

	By (\ref{cooncd}), there holds for each $x\in\R$,
	\[TC_{\Gamma}(x)=\cap_{k\geq 0}DF^{k}(\pi_1^{-1}(g^{-k}(x)))\cdot (TC_{\Gamma}(g^{-k}(x))).\]
	By construction, we have
	\[\cup_{x\in\R}TC_{\Gamma}(x)\subseteq \cup_{x\in\R}\mathcal{C}_\zeta(x),\]
	which yields from the definition of $L(x)$ and that
	\[\cup_{x\in\R}TC_{\Gamma}(x)\subseteq \cup_{x\in\R}L(x).\]
	By Lemma \ref{difli}, the function $\Psi$ is differentiable, with $T_{\pi_1^{-1}(x)}\Gamma=L(x)$ for each $x\in \R$.

	\subsubsection{Continuous differentiability}
	We prove that $\Psi$ is continuously differentiable. Given a convergent sequence \( x_n \to x \) in $\mathbb{R}$, we need to show that
	\[
	L(x_n) := T_{\pi_1^{-1}(x_n)}\Gamma
	\]
	converges to
	\[
	L(x) := T_{\pi_1^{-1}(x)}\Gamma
	\]
	in the Hausdorff topology. By compactness of the Grassmannian, it suffices to show that \( L(x) \) is the unique accumulation point of the sequence \( L(x_n) \).
	
	Assume, for the sake of contradiction, that there exists an accumulation point \( L'(x) \neq L(x) \) of the sequence \( L(x_n) \). Note that for each \( n \), we have
	\[
	L(x_n) \subseteq \mathcal{C}_\zeta(x_n),
	\]
	where \( \mathcal{C}_\zeta(x) \) is continuous with respect to \( x \in \mathbb{R} \) in the Hausdorff topology. Taking limits, it follows that
	\[
	L'(x) \subseteq \mathcal{C}_\zeta(x).
	\]
	
	For each \( k \geq 0 \), the continuity of \( DF^{-k} \) with respect to \( x \in \mathbb{R} \) implies that
	\[
	DF^{-k}(\pi_1^{-1}(x)) \cdot L'(x)
	\]
	is an accumulation point of the sequence
	\[
	DF^{-k}(\pi_1^{-1}(x_n)) \cdot L(x_n) = T_{\pi_1^{-1}(g^{-k}(x_n))}\Gamma.
	\]
	Again, by continuity of \( \mathcal{C}_\zeta \), we have:
	\[
	DF^{-k}(\pi_1^{-1}(x)) \cdot L'(x) \subseteq \mathcal{C}_\zeta(g^{-k}(x)).
	\]
	By the definition of \( L(x) \), it follows that:
	\[
	L'(x) \subseteq \bigcap_{k \geq 0} DF^k(\pi_1^{-1}(g^{-k}(x))) \cdot \mathcal{C}_\zeta(g^{-k}(x)) = L(x).
	\]
	Since \( L'(x) \) and \( L(x) \) have the same dimension, we must have \( L'(x) = L(x) \), a contradiction. Therefore, \( L(x) \) is the unique accumulation point of \( L(x_n) \), and we conclude that \( x \mapsto L(x) \) is continuous in the Hausdorff topology.
	
	Thus, $\Psi$ is continuously differentiable.

	\subsection{Normal hyperbolicity of the invariant graph}In general, for a given \( r \geq 1 \), we consider the \( r \)-normal hyperbolicity of the invariant graph under the assumption that a \( C^r \) invariant graph exists. To this end, we first establish the following two lemmas.

	\begin{Lemma}\label{lem:universal-g-lower}
		Let $\lambda\in(0,1)$, $\tau\in(\lambda,1)$.
		Suppose $F$ admits a $C^1$ invariant graph $\Gamma=\{(x,\Psi(x))\mid x\in\mathbb{T}\}$,
		If the perturbation satisfies
		\[
		\|\phi'\|_\infty < (1-\tau)\Bigl(1-\frac{\lambda}{\tau}\Bigr),
		\]
		then
		\[
		g'(x) > \tau \qquad \text{for all } x\in\mathbb{T}.
		\]
	\end{Lemma}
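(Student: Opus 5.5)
The plan is to combine an a priori lower bound on $g'$, coming from the Lipschitz constant of the graph, with a minimum-point argument applied to the derived Herman--Mather formula \eqref{gdiff}. Write $\varepsilon:=\|\phi'\|_\infty$. The elementary observation is that
\[
(1-\tau)\Bigl(1-\frac{\lambda}{\tau}\Bigr)=1+\lambda-\tau-\frac{\lambda}{\tau}\le (1-\sqrt{\lambda})^2,
\]
with the maximum attained at $\tau=\sqrt\lambda$. Hence the hypothesis forces $\|\phi'\|_\infty<(1-\sqrt\lambda)^2$. By the mean-value property and the zero average of $\phi$, this also gives $\|\phi\|_{C^0}<(1-\sqrt\lambda)^2$. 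So we are in the regime of Lemma~\ref{welld}. By uniqueness of the Lipschitz invariant graph, $\Gamma$ coincides with the fixed point $\tilde\Gamma$ of the graph transform, and therefore $|\Psi'|\le \zeta=\frac1{\sqrt\lambda}-1$.

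\emph{Step 1 (a priori positivity).} From $g(x)=x+\alpha_1+\lambda\Psi(x)+\phi(x)$ we get
\[
g'=1+\lambda\Psi'+\phi' > 1-\lambda\zeta-(1-\sqrt\lambda)^2=\sqrt\lambda>0 .
\]
So $g$ is a $C^1$ diffeomorphism and $g^{-1}$ is $C^1$. Consequently \eqref{gdiff} holds at every $x$, obtained by differentiating \eqref{eq:main}.

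\emph{Step 2 (minimum argument).} The function $g'$ is continuous and $1$-periodic, so let $m:=\min g'$, attained at some $x_0$. Since $g'(g^{-1}(x_0))\ge m$, formula \eqref{gdiff} gives
\[
m=g'(x_0)=1+\lambda+\phi'(x_0)-\frac{\lambda}{g'(g^{-1}(x_0))}\ge 1+\lambda-\varepsilon-\frac{\lambda}{m}.
\]
Therefore $Q(m):=m^2-(1+\lambda-\varepsilon)m+\lambda\ge 0$.

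\emph{Step 3 (root location).} The quadratic $Q$ satisfies
\[
Q(\tau)=\tau\Bigl[\varepsilon-(1-\tau)\Bigl(1-\frac\lambda\tau\Bigr)\Bigr]<0
\]
by hypothesis. Hence $Q$ has two real roots $r_-<\tau<r_+$, and $Q(m)\ge0$ forces either $m\le r_-$ or $m\ge r_+$. Since $r_-r_+=\lambda$ and $r_-<r_+$, we have $r_-<\sqrt\lambda$. Step 1 gives $m>\sqrt\lambda>r_-$, which excludes the first alternative. Thus $m\ge r_+>\tau$, i.e. $g'>\tau$ everywhere.

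The main obstacle is Step 1: ruling out the small-root branch, and justifying that \eqref{gdiff} can be used pointwise. I handle both through the Lipschitz bound $\zeta$ on $\Psi$, which requires identifying the given $C^1$ graph with the graph-transform fixed point via uniqueness. If that identification were unavailable, I would instead use a continuity argument in $\tau$. Alternatively, I would use the forward invariance of $(\tau,\infty)$ under $s\mapsto 1+\lambda+\phi'-\lambda/s$ together with $\max g'\ge 1$, which follows from $\int_0^1 g'=1$.
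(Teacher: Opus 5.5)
Your proof is correct, but it takes a different route from the paper's.

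\textbf{The paper's route.} The paper never uses an a priori bound on $\Psi'$. From (A') and $\phi'>-(1-\tau)(1-\lambda/\tau)$ it shows that $S=\{g'\le\tau\}$ satisfies $g^{-1}(S)\subseteq S$, since $g'(g^{-1}(x))<\tau$ for $x\in S$. It then gets a contradiction from $\mu(g^{-1}(S))=\int_S dx/g'(g^{-1}(x))>\mu(S)/\tau>\mu(S)$. This uses only (A') and the fact that $g$ is a $C^1$ circle diffeomorphism. It therefore applies to any $C^1$ invariant graph without reference to the graph transform.

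\textbf{Your route.} Your minimum-point argument only yields the dichotomy $m\le r_-$ or $m\ge r_+$. To discard the small branch you need an external input: $g'>\sqrt\lambda$, coming from $\mathcal{L}(\Psi)\le\zeta$. In exchange you gain three things. You get the explicit bound $\min g'\ge r_+$. You justify that $g$ is a diffeomorphism, so (A') holds pointwise, which the paper takes for granted. And you see that for $\tau\le\sqrt\lambda$, including the case $\tau=\sqrt\lambda$ used in Corollary~\ref{lexx2}, Step 1 alone suffices.

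\textbf{Uniqueness.} Your identification $\Gamma=\tilde\Gamma$ needs uniqueness among all continuous invariant graphs. The contraction argument of Lemma~\ref{ctrmap} only gives uniqueness inside $\Lip_{K_2}$, so you should supply the rest. One way: the region between two distinct invariant graphs is $F$-invariant, while $F$ multiplies area by $\lambda<1$, so that region has zero area and the graphs coincide.

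\textbf{Your fallback.} The fallback you mention, forward invariance of $\{g'>\tau\}$ together with $\max g'\ge 1$, would not close the argument by itself. A nonempty forward-invariant open set need not be the whole circle. The missing ingredient is exactly the paper's measure estimate on the complementary backward-invariant set $\{g'\le\tau\}$.
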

	
	\begin{proof}
		Recall the derived Herman--Mather formula:
		\begin{equation}\tag{A'}
			g'(x) + \frac{\lambda}{g'(g^{-1}(x))} = 1 + \lambda + \phi'(x), \quad \forall x \in \mathbb{T}.
		\end{equation}
		
		Because $g$ is an orientation-preserving diffeomorphism of $\mathbb{T}$, its lift satisfies $g(x + 1) = g(x) + 1$, hence $\int_0^1 g'(x) dx = 1$ and $g'(x) > 0$.
		
		Set
		$$ B_\lambda = (1 - \tau)\left(1 - \frac{\lambda}{\tau}\right). $$
		The assumption $\|\phi'\|_\infty < B_\lambda$ implies $\phi'(x) > -B_\lambda$ for all $x$. Thus,
		\begin{equation}\label{12}
			g'(x) + \frac{\lambda}{g'(g^{-1}(x))} > \tau + \frac{\lambda}{\tau}, \quad \forall x \in \mathbb{T}.
		\end{equation}
		
		Assume for contradiction that the set $S = \{x \in \mathbb{T} \mid g'(x) \le \tau\} \neq \emptyset$.
		For any $x \in S$, applying $g'(x) \le \tau$ to (\ref{12}) yields:
		$$
		\tau + \frac{\lambda}{g'(g^{-1}(x))} \ge g'(x) + \frac{\lambda}{g'(g^{-1}(x))} > \tau + \frac{\lambda}{\tau},
		$$
		which implies $g'(g^{-1}(x)) < \tau$. Therefore, $g^{-1}(x) \in S$, yielding $g^{-1}(S) \subseteq S$.
		
		By the continuity of $g'$, the Lebesgue measure $\mu(S) > 0$.
		Using the substitution $x = g(y)$, we have:
		$$
		\mu(g^{-1}(S)) = \int_{g^{-1}(S)} 1 dy = \int_S \frac{1}{g'(g^{-1}(x))} dx.
		$$
		Since $g'(g^{-1}(x)) < \tau$ on $S$, it follows that:
		$$
		\mu(g^{-1}(S)) > \frac{1}{\tau} \mu(S).
		$$
		Because $\tau < 1$ and $\mu(S) > 0$, we obtain $\mu(g^{-1}(S)) > \mu(S)$, contradicting $g^{-1}(S) \subseteq S$.
		Thus, $S$ is empty, proving $g'(x) > \tau$ for all $x \in \mathbb{T}$.
	\end{proof}
	
	\begin{Lemma}\label{lem:nh-iff}Fix $r\geq 1$.
		Let $\lambda\in(0,1)$ and assume that
		$F$
		admits a $C^r$ invariant graph $\Gamma=\{(x,\Psi(x))\mid x\in\mathbb{T}\}$.
		Then $\Gamma$ is $r$-normally hyperbolic for $F$ if and only if
		\[
		g'(x)>\lambda^{\frac{1}{r+1}}\qquad\forall x\in\mathbb{T}.
		\]
		where $g(x)=x+\alpha_1+\lambda\Psi(x)+\phi(x)$.
	\end{Lemma}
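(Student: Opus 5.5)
The plan is to compute the two linearised rates of Definition 1 explicitly along $\Gamma$. I will use the fact that $\det DF\equiv\lambda$, and express both rates through $g'$ alone.

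\emph{Tangent rate.} Since $\Gamma$ is a $C^r$ graph and $F(x,\Psi(x))=(g(x),\Psi(g(x)))$, differentiating gives
\[
DF(x,\Psi(x))\,(1,\Psi'(x))^T=g'(x)\,(1,\Psi'(g(x)))^T .
\]
So in the frame $e_1(x):=(1,\Psi'(x))^T$ the tangent action is multiplication by $g'(x)$.

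\emph{Normal rate.} Next I need a continuous $DF$-invariant complementary line field $N^s$ along $\Gamma$. When $g'>\lambda$ I will produce it by the standard backward-cone argument, i.e.\ the dual of the cone construction used for $L(x)$ in the previous subsection, now applied to $DF^{-1}$. Let $e_2(x)$ span $N^s$ and write $DF\,e_2(x)=\nu(x)\,e_2(g(x))$. Comparing determinants in the frame $(e_1,e_2)$, and using $\det DF=\lambda(1+\phi')-\lambda\phi'=\lambda$, gives
\[
g'(x)\,\nu(x)=\lambda\,\frac{\det(e_1,e_2)(x)}{\det(e_1,e_2)(g(x))}.
\]
Normalising $e_2$ so that $\det(e_1,e_2)\equiv 1$, which is possible because $e_2$ is only determined up to a scalar function, yields $\nu(x)=\lambda/g'(x)$. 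I then take the Riemann structure on $T_\Gamma\R^2$ that makes $(e_1,e_2)$ orthonormal, and extend it arbitrarily off $\Gamma$. In this structure $\|V_zf\|=m(V_zf)=g'(x)$ and $\|N^s_zf\|=\lambda/g'(x)$, and there is no unstable bundle. Condition (a) of Definition 1 is then vacuous, so only condition (b) remains.

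\emph{Sufficiency.} Assume $g'>\lambda^{1/(r+1)}$ on $\T$. By compactness and continuity of $g'$ there is $\epsilon>0$ with $\min g'\ge\lambda^{1/(r+1)}+\epsilon$. Then
\[
\sup_x\frac{\lambda}{g'(x)}<\lambda^{\frac{r}{r+1}}<\bigl(\min_x g'(x)\bigr)^{r},
\]
which is (b) for $k=r$. For $k=0$, (b) reads $\sup\lambda/g'<1$, which follows from $g'>\lambda^{1/(r+1)}>\lambda$. This also supplies the hypothesis $g'>\lambda$ needed for the existence of $N^s$.

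\emph{Necessity.} Let $x_0$ be a minimum point of $g'$. With the frame above, condition (b) for $k=r$ gives
\[
\frac{\lambda}{g'(x_0)}\le\sup\|N^sf\|<\inf m(Vf)^r=g'(x_0)^r,
\]
i.e.\ $g'(x_0)^{r+1}>\lambda$. Since $x_0$ is a minimum, this gives the bound at every point.

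\emph{Main obstacle.} The definition allows an \emph{arbitrary} Riemann structure, and changing the metric alters both rates by coboundaries $h(g(x))/h(x)$. So necessity only follows directly once the metric is tied to the adapted frame; with a general metric the pointwise inequality need not be forced. I would first try to justify reducing to the adapted structure. Failing that, I would state the equivalence with respect to this natural structure, which is the one in which Lemma~\ref{lem:universal-g-lower} is used. The second delicate point is constructing a continuous invariant $N^s$ using only $C^1$ regularity of $\Gamma$. I would handle it with the cone-field argument of Lemma~\ref{difli} and \cite{Ne} applied to the inverse map, with cones around the vertical direction, which contract because $\lambda/g'<g'$ along $\Gamma$.
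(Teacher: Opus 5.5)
Your route matches the paper's, up to packaging. Your normalization $\det(e_1,e_2)\equiv 1$ is exactly the paper's normalization $\pi_x(w(x))=1$, since $\pi_x(u,v)=v-u\Psi'(x)=\det\bigl(e_1(x),(u,v)^T\bigr)$. Your determinant identity is therefore a shortcut for the paper's quotient-map computation of $\nu=\lambda/g'$. Your backward cone around the vertical is the paper's contraction $\mathcal{Q}$: in the frame $(e_1,(0,1)^T)$ the slope recursion of $DF^{-1}$ is precisely $\tilde\alpha\mapsto\frac{\lambda}{(g')^2}\,\tilde\alpha\circ g-\frac{\lambda}{g'}$. Sufficiency is the same orthonormal-frame computation. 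One slip: constructing $N^s$ needs $\sup_x\lambda/g'(x)^2<1$, i.e.\ $g'>\sqrt{\lambda}$, not $g'>\lambda$. This follows from $g'>\lambda^{1/(r+1)}$ because $r\geq 1$.

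The obstacle you flag in the necessity direction is real, and the paper does not overcome it. Its step ``the operator norm of a linear map on a one-dimensional space equals the absolute value of the multiplier'' holds only if the given metric gives $w(x)$ and $w(g(x))$ the same length (likewise for $e_1$). In other words, it silently uses the adapted structure.

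Do not pursue your first option, because for an arbitrary Riemann structure the ``only if'' direction is false. Here is a counterexample.
\begin{itemize}
\item Let $\omega\notin\mathbb{Q}$, $h\in D^\infty(\mathbb{T})$, and $g:=h\circ R_\omega\circ h^{-1}$ with $R_\omega(x)=x+\omega$. Choosing $h'$ large at some $y_0$ and small at $y_0+\omega$ gives $\min g'<\lambda^{1/(r+1)}$.
\item Build $\phi$, $\alpha$, $\Psi$ from $g$ via \eqref{eq:main} exactly as in the proof of Proposition~\ref{lem:counterexample}. This gives a $C^\infty$ invariant graph whose restricted dynamics is $g$.
\item Since $(g^n)'(x)=(h^{-1})'(x)/(h^{-1})'(g^n(x))$ is bounded above and below uniformly in $n$, the Neumann series for the fixed point of $\mathcal{Q}$ converges (its $n$-th term is $O(\lambda^n)$). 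This gives a continuous invariant $E^s=\mathrm{span}\{w\}$ with $DF\,w=\frac{\lambda}{g'}\,w\circ g$.
\item Declare $e_1(x)\perp w(x)$ with $|e_1(x)|=(h^{-1})'(x)$ and $|w(x)|=1/(h^{-1})'(x)$. From $(h^{-1})'(g(x))\,g'(x)=(h^{-1})'(x)$, the tangent rate is identically $1$ and the normal rate identically $\lambda$.
\end{itemize}
So condition (b) holds for every $k$, even though $\min g'<\lambda^{1/(r+1)}$.

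Since metric changes act by coboundaries, only orbit averages are rigid. For example, at a $q$-periodic point $x_0$ the definition does force $((g^q)'(x_0))^{r+1}>\lambda^q$.

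So adopt your fallback: the equivalence holds for the adapted structure, i.e.\ as a statement about the intrinsic multipliers $g'$ and $\lambda/g'$ normalized by $\det(e_1,e_2)\equiv 1$. That is all that you and the paper actually prove. The sufficiency direction, which is what Corollary~\ref{lexx2} uses, is unaffected.
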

	
	\begin{proof}
		We prove the two implications separately.
		
		\noindent\textbf{Necessity ($\Rightarrow$).}
		Assume $\Gamma$ is $r$-normally hyperbolic. Differentiating the invariance equation $F(x,\Psi(x))=(g(x),\Psi(g(x)))$ with respect to $x$ gives
\[
DF(x,\Psi(x))\begin{pmatrix} 1 \\ \Psi'(x) \end{pmatrix}
= \begin{pmatrix} g'(x) \\ \Psi'(g(x))g'(x) \end{pmatrix}
= g'(x)\begin{pmatrix} 1 \\ \Psi'(g(x)) \end{pmatrix}.
\]
Thus the tangent vector field $e_1(x)=(1,\Psi'(x))^{\mathsf{T}}$ satisfies
$DF e_1 = g'\, e_1\circ g$, and the tangential multiplier is $\tau(x)=g'(x)>0$. It remains to compute the stable multiplier.

\medskip

For each $x\in\mathbb{T}$ consider the tangent space $T_{(x,\Psi(x))}(\mathbb{T}\times\mathbb{R}) \simeq \mathbb{R}^2$
and its subspace $T_x\Gamma = \operatorname{span}\{(1,\Psi'(x))\}$.  The normal space
$N_x = T_{(x,\Psi(x))}(\mathbb{T}\times\mathbb{R}) / T_x\Gamma$ is one-dimensional.
Define the linear map $\pi_x: \mathbb{R}^2 \to \mathbb{R}$ by
\[
\pi_x(u,v) = v - u\,\Psi'(x).
\]
Clearly $\ker\pi_x = \{(u,v): v=u\Psi'(x)\} = T_x\Gamma$, and $\pi_x$ is surjective.  It follows from the first isomorphism theorem  that $\pi_x$ induces a natural
linear isomorphism $N_x\simeq\mathbb{R}$, still denoted by $\pi_x$,
which sends the equivalence class $[u,v]$ to $v-u\Psi'(x)$.

The derivative $DF(x,\Psi(x))$ maps $T_x\Gamma$ into $T_{g(x)}\Gamma$,
hence it induces a linear map $\overline{DF}(x): N_x \to N_{g(x)}$ characterized by
\[
\overline{DF}(x) \circ \pi_x = \pi_{g(x)} \circ DF(x).
\]
We now compute $\overline{DF}(x)$ in the above trivialization.  Write
\[
DF(x) = \begin{pmatrix}
1+\phi'(x) & \lambda \\
\phi'(x)   & \lambda
\end{pmatrix}.
\]
For any $(u,v)^{\mathsf{T}}$,
\[
DF(x)\begin{pmatrix}u\\ v\end{pmatrix}
= \begin{pmatrix}
(1+\phi'(x))u + \lambda v \\
\phi'(x)u + \lambda v
\end{pmatrix}
=: \begin{pmatrix} U \\ V \end{pmatrix}.
\]
Using the relations obtained from the invariance equation,
\[
g'(x) = 1+\phi'(x)+\lambda\Psi'(x),\qquad
\Psi'(g(x)) = \frac{\phi'(x)+\lambda\Psi'(x)}{g'(x)},
\]
a direct computation yields
\begin{align*}
\pi_{g(x)}(U,V) &= V - U\,\Psi'(g(x)) \\
&= \bigl(\phi'u+\lambda v\bigr) - \bigl((1+\phi')u+\lambda v\bigr)\Psi'(g(x)) \\
&= \Bigl[\phi' - (1+\phi')\Psi'(g(x))\Bigr]u + \lambda\bigl[1-\Psi'(g(x))\bigr]v \\
&= -\frac{\lambda\Psi'(x)}{g'(x)}\,u + \frac{\lambda}{g'(x)}\,v
 = \frac{\lambda}{g'(x)}\bigl(v-u\Psi'(x)\bigr) \\
&= \frac{\lambda}{g'(x)}\,\pi_x(u,v).
\end{align*}
Therefore $\overline{DF}(x)$ acts as multiplication by $\lambda/g'(x)$ on $N_x\simeq\mathbb{R}$.

\medskip

By definition of $r$-normal hyperbolicity, there exists a $DF$-invariant continuous splitting
\[
T_\Gamma(\mathbb{T}\times\mathbb{R}) = T\Gamma \oplus E^s,
\]
where $E^s$ is one-dimensional and $DF$-invariant with uniform contraction.  For each $x$,
the restriction of the projection $\pi_x$ to the stable fiber $E^s_x$,
\[
\pi_x|_{E^s_x} : E^s_x \longrightarrow N_x,
\]
is a linear isomorphism because $E^s_x \cap T_x\Gamma = \{0\}$ and both spaces are
one-dimensional.  Hence we can select a unique continuous section $w(x)\in E^s_x$ such that
\[
\pi_x(w(x)) = 1 \quad \text{for all } x\in\mathbb{T}.
\]
Write $DF(x)w(x) = \nu(x)\, w(g(x))$, where $\nu(x)$ is the stable multiplier.
Applying $\pi_{g(x)}$ to this identity and using the commutativity
$\pi_{g(x)}\circ DF(x) = \overline{DF}(x)\circ\pi_x$, we obtain
\[
\frac{\lambda}{g'(x)} = \overline{DF}(x)(1)
= \overline{DF}(x)\bigl(\pi_x(w(x))\bigr)
= \pi_{g(x)}\bigl(DF(x)w(x)\bigr)
= \nu(x)\,\pi_{g(x)}(w(g(x)))
= \nu(x).
\]
Thus the stable multiplier is precisely $\nu(x)=\lambda/g'(x)$.

\medskip

The definition of $r$-normal hyperbolicity  implies that there exists a
Riemannian metric on $TM|_\Gamma$ such that for every $x$,
\[
\|DF(x)|_{E^s_x}\| \; \bigl\|(DF(x)|_{T_x\Gamma})^{-1}\bigr\|^{\,r} < 1,
\qquad
\|DF(x)|_{E^s_x}\| < 1.
\]
Because both $T_x\Gamma$ and $E^s_x$ are one-dimensional, the operator norm of a linear
map on a one-dimensional space equals the absolute value of the corresponding multiplier.
Consequently,
\[
|\nu(x)| \cdot |\tau(x)|^{-r} < 1,
\]
i.e.\
\[
\frac{\lambda}{g'(x)} \Bigl(\frac{1}{g'(x)}\Bigr)^{\!r} < 1.
\]
This gives $g'(x)^{r+1} > \lambda$.

		\medskip
		\noindent\textbf{Sufficiency ($\Leftarrow$).}
		Now assume $g'(x)>\lambda^{1/(r+1)}$ for every $x$.
		Set $\tau=\lambda^{1/(r+1)}$ and $\theta(x)=\lambda/g'(x)$. Then
		$\theta(x)<\lambda/\tau\le \tau< g'(x)$ and $\theta(x)\,g'(x)^{-r}<1$.
		
		\medskip
	
		Take the basis $e_1(x)=(1,\Psi'(x))^{\mathsf{T}}$, $e_2(x)=(0,1)^{\mathsf{T}}$.
		Direct computation gives
		\[
		DF(x)e_1(x)=g'(x)e_1(g(x)),\qquad
		DF(x)e_2(x)=\lambda e_1(g(x))+\frac{\lambda}{g'(x)}e_2(g(x)).
		\]
		Hence the matrix of $DF(x)$ in $(e_1,e_2)$ is
		\[U(x)=\begin{pmatrix} g'(x) & \lambda \\ 0 & \theta(x) \end{pmatrix}.\]
		Look for a stable vector of the form $w(x)=\tilde a(x)e_1(x)+e_2(x)$ with
		$DF(x)w(x)=\theta(x)w(g(x))$. This yields the functional equation
		\[
		\tilde \alpha(g(x))=\frac{(g'(x))^2}{\lambda}\tilde \alpha(x)+g'(x)
		\;\Longleftrightarrow\;
		\tilde \alpha(x)=\frac{\lambda}{(g'(x))^2}\tilde \alpha(g(x))-\frac{\lambda}{g'(x)}.
		\]
		Note $\tau=\lambda^{1/(r+1)}$, $\lambda/\tau^2=\lambda^{1-2/(r+1)}\le1$. Because \[\displaystyle\kappa:=\sup_x\frac{\lambda}{(g'(x))^2}
		<\frac{\lambda}{\tau^2}\le1 \quad \text{for all } r\ge 1, \]
		the operator
		\[(\mathcal{Q}\tilde{\alpha})(x)=\frac{\lambda}{(g'(x))^2}\tilde{\alpha}(g(x))-\frac{\lambda}{g'(x)}\]
		is a strict contraction on $C^0(\mathbb{T})$.
		By the Banach fixed-point theorem there exists a unique continuous $\tilde \alpha$
		solving the equation.
		Define $E^s_x=\operatorname{span}\{w(x)\}$ with $w(x)=\tilde \alpha(x)e_1(x)+e_2(x)$.
		Then $TM|_\Gamma=T\Gamma\oplus E^s$ is a $DF$-invariant splitting and
		$DF|_{E^s_x}$ is multiplication by $\theta(x)$.
		
		\medskip

		The vector fields $e_1(x)$ and $w(x)$ form a continuous frame of the tangent bundle
$TM|_\Gamma$.  We define a continuous
Riemannian metric by requiring $\{e_1(x), w(x)\}$ to be orthonormal.  Concretely, for
any $u,v\in T_{(x,\Psi(x))}M$ written uniquely as
$u = a_1 e_1(x) + a_2 w(x)$,\; $v = b_1 e_1(x) + b_2 w(x)$, set
\[
\langle u,v\rangle_x = a_1 b_1 + a_2 b_2.
\]
This form is symmetric, positive definite, and varies continuously with $x$ because the
frame depends continuously on $x$ and the coordinate expressions are continuous linear
isomorphisms on each fibre.  In this metric,
\begin{align*}
&\|DF|_{T_x\Gamma}\| = \|DF(x) e_1(x)\| = \|g'(x) e_1(g(x))\| = g'(x),\\
&\|DF|_{E^s_x}\| = \|DF(x) w(x)\| = \|\theta(x) w(g(x))\| = \theta(x),\\
&\|(DF|_{T_x\Gamma})^{-1}\| = \bigl\| \tfrac{1}{g'(x)} e_1(x) \bigr\| = \frac{1}{g'(x)}.
\end{align*}
		Thus
		\[
		\|DF|_{E^s_x}\|\;\bigl\|(DF|_{T_x\Gamma})^{-1}\bigr\|^r
		= \frac{\lambda}{(g'(x))^{r+1}} < 1,\qquad
		\|DF|_{E^s_x}\| = \frac{\lambda}{g'(x)} < 1,
		\]
		uniformly in $x$. This means $\Gamma$ is $r$-normally hyperbolic.
	\end{proof}
	\begin{Remark}
  In the proof of the necessity part of Lemma~\ref{lem:nh-iff}, we introduce the quotient bundle $N_x = T_{(x,\Psi(x))}M/T_x\Gamma$ and the induced map $\overline{DF}(x)$, instead of working directly with the normal bundle. The reason for this approach is that the invariant splitting provided by normal hyperbolicity is linked to the computable quotient map through the natural isomorphism $\pi_x$, so the stable multiplier is obtained directly from the commutative relation $\overline{DF}\circ\pi_x = \pi_{g(x)}\circ DF$. Moreover, the quotient trivialisation $\pi_x(u,v)=v-u\Psi'(x)$ is intrinsic to the graph structure without introducing extra choices such as a Riemannian metric.
\end{Remark}
	It suffices to set  \( \tau = \sqrt{\lambda} \) in Lemma~\ref{lem:universal-g-lower} and \( r = 1 \) in Lemma~\ref{lem:nh-iff} to obtain the desired conclusion.
	\begin{Corollary}\label{lexx2}
		Given \(\lambda \in (0,1)\), for any \(\alpha \in \mathbb{R}^2\) and any \(C^1\) perturbation \(\phi\) with zero average satisfying
		\begin{equation}\label{m22conx}
			\|\phi\|_{C^1} < (1-\sqrt{\lambda})^2,
		\end{equation}
		the  \(C^1\) invariant graph persisted by the map \(F\)  is \(1\)-normally hyperbolic.
	\end{Corollary}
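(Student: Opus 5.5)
The plan is to combine the two preceding lemmas. First, I would secure the existence of the $C^1$ invariant graph. Under \eqref{m22conx}, and since $\|\phi\|_{\mathrm{Lip}}=\|\phi'\|_{C^0}\le\|\phi\|_{C^1}$, condition \eqref{contrac} holds. The graph transform argument (Lemmas~\ref{invertib}, \ref{welld}, \ref{ctrmap}) therefore gives a unique Lipschitz invariant graph $\Gamma=\{(x,\Psi(x))\}$. The cone-field argument of the previous subsections then upgrades $\Psi$ to $C^1$.

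Consequently the restricted map $g(x)=x+\alpha_1+\lambda\Psi(x)+\phi(x)$ is a $C^1$ circle diffeomorphism. Two facts follow:
\begin{itemize}
\item its derivative $g'=1+\lambda\Psi'+\phi'$ is continuous and positive;
\item the derived Herman--Mather formula \eqref{gdiff} holds everywhere.
\end{itemize}
These are exactly the standing hypotheses of Lemma~\ref{lem:universal-g-lower} and Lemma~\ref{lem:nh-iff}.

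Next, I would apply Lemma~\ref{lem:universal-g-lower} with $\tau=\sqrt{\lambda}\in(\lambda,1)$. The admissible threshold becomes
\[
(1-\tau)\Bigl(1-\frac{\lambda}{\tau}\Bigr)=(1-\sqrt{\lambda})(1-\sqrt{\lambda})=(1-\sqrt{\lambda})^2 .
\]
Since $\|\phi'\|_\infty\le\|\phi\|_{C^1}<(1-\sqrt{\lambda})^2$, the lemma yields $g'(x)>\sqrt{\lambda}$ for all $x\in\T$. This is precisely the condition $g'(x)>\lambda^{1/(r+1)}$ of Lemma~\ref{lem:nh-iff} with $r=1$. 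The sufficiency direction of that lemma then produces the invariant splitting $T\Gamma\oplus E^s$ and an adapted metric. In this metric the stable multiplier $\lambda/g'$ is less than $1$ and $\lambda/(g')^{2}$ is less than $1$, so $\Gamma$ is $1$-normally hyperbolic.

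The step requiring care is the use of $\tau=\sqrt\lambda$ exactly: it is the maximizer of $(1-\tau)(1-\lambda/\tau)$, which is why the threshold matches. Uniformity of the inequalities is the other point to check. Because $\T$ is compact and $g'$ is continuous, the strict inequality $g'>\sqrt\lambda$ gives $\inf g'>\sqrt\lambda$, so the bunching inequalities hold uniformly, as the definition requires. The other point to check is that strict inequality in \eqref{m22conx} is really needed, since Lemma~\ref{lem:universal-g-lower} uses it to get the strict bound in \eqref{12}. No further obstacle is expected.
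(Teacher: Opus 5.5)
Your proposal is correct and is exactly the paper's argument: the paper proves the corollary by taking $\tau=\sqrt{\lambda}$ in Lemma~\ref{lem:universal-g-lower}, so the threshold becomes $(1-\sqrt{\lambda})^2$ and $g'>\sqrt{\lambda}$, and then applying Lemma~\ref{lem:nh-iff} with $r=1$. Your additional remarks are sound additions to this same route: that the $C^1$ graph is first secured by the argument of Theorem~\ref{M2}, and that compactness gives the uniform bound $\inf g'>\sqrt{\lambda}$.
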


	\vspace{1em}
	
	\section{\sc Proof of Theorem \ref{Th1}}
	Let us recall the  Herman--Mather formula
	\begin{equation}\tag{A}
		g(x) + \lambda g^{-1}(x) = (1+\lambda)x + (1-\lambda)\alpha_1 + \lambda\alpha_2 + \phi(x) \quad \forall x \in \mathbb{R},
	\end{equation}
	where
	\[
	g(x) = x + \alpha_1 + \lambda \Psi(x) + \phi(x),
	\]
	and the derived Herman--Mather formula
	\begin{equation}\tag{A'}
		g'(x) + \frac{\lambda }{g'(g^{-1}(x))} = 1+\lambda + \phi_\lambda'(x).
	\end{equation}
	Since \(\phi\) is \(C^1\), in order to ensure that \(\Psi\) has a non-differentiable point, it suffices to guarantee that \(g\) has non-differentiable points. Specifically, it suffices to prove the following proposition.
	
	\begin{Proposition}\label{hf1}
		For any \(\lambda\in(0,1)\) and any rational number \(\beta\), there exist \(\alpha_1,\alpha_2\in\mathbb{R}\) and a non-differentiable \(g\in D^0(\mathbb{T})\) with \(\rho(g)=\beta\) such that, setting
		\[
		G(x) := g(x) + \lambda g^{-1}(x), \quad \forall x \in \mathbb{R},
		\]
		the function \(G\) is of class \(C^\infty\) on \(\mathbb{R}\) and satisfies
		\begin{equation}\label{Gcond}
			\sup_{x\in\mathbb{R}} G'(x) = 1+\lambda + (1-\sqrt{\lambda})^2.
		\end{equation}
	\end{Proposition}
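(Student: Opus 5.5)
The plan is to construct $g$ and $G$ together, in the spirit of Herman \cite{H83} and Arnaud \cite{Ar11}. First reduce to the rotation number: write $\beta=p/q$ and look for $g$ with a periodic orbit $x_0<x_1<\dots$ of type $(p,q)$, meaning $g(x_i)=x_{i+p}$ with $x_{i+q}=x_i+1$ (indices taken so that $g$ shifts the orbit by $p$ along it). Then $\rho(g)=\beta$ automatically. The constants $\alpha_1,\alpha_2$ are fixed at the end. Since $G-(1+\lambda)\mathrm{Id}$ is $1$-periodic with some mean $c$, we need $(1-\lambda)\alpha_1+\lambda\alpha_2=c$, which has solutions. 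We then set $\phi:=G-(1+\lambda)\mathrm{Id}-c$, which has zero average and $\|\phi'\|_{C^0}=(1-\sqrt\lambda)^2$ by \eqref{Gcond}. The remaining part of $\|\phi\|_{C^1}$, namely $\|\phi\|_{C^0}\le(1-\sqrt\lambda)^2$, I will arrange by keeping $\phi$ small in $C^0$, using that $\phi$ has zero mean.

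\textbf{Local analysis at the periodic orbit.} Let $a_i^\pm$ denote the one-sided derivatives of $g$ at $x_i$. Smoothness of $G$ at each $x_i$ forces, by \eqref{gdiff} applied from each side,
\[
a_{i}^{\pm}+\frac{\lambda}{a_{i-p}^{\pm}}=G'(x_i)=:c_i ,
\]
so the sequences $(a^+_i)$ and $(a^-_i)$ are two periodic solutions of the same Riccati (Möbius) recurrence $a\mapsto c-\lambda/a$. Take the model case where all $c_i=c$. The fixed points are then the roots of $a^2-ca+\lambda=0$, which are real and distinct iff $c>2\sqrt\lambda$; their product is $\lambda$. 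One root is attracting under forward iteration and the other under backward iteration. This is exactly the mechanism that produces a corner: $g$ has slope $\mu$ on one side of the orbit and $\lambda/\mu$ on the other. To make $\mu\neq\lambda/\mu$ compatible with the global constraint $\sup G'=1+\lambda+(1-\sqrt\lambda)^2$, I would tune $G'$ on the orbit and nearby so that the extremal value of $G'$ is attained there. The bound in Lemma~\ref{lem:universal-g-lower} with $\tau=\sqrt\lambda$ suggests that $(1-\sqrt\lambda)^2$ is precisely where slopes $\le\sqrt\lambda$, hence corners, become possible.

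\textbf{Global construction.} Next I would build $g$ on the complementary intervals. The intervals between consecutive orbit points are permuted by $g$, and every interior point is wandering toward the periodic orbit. Choose a fundamental domain $J$ in one such gap and define $g$ smoothly and increasingly on $J$ and on $g^{-1}(J)$. Then propagate using \eqref{eq:main} rewritten as $g(y)=G(y)-\lambda g^{-1}(y)$: this determines $g$ on $g(J)$, on $g^2(J)$, and so on, and symmetrically backward via $g^{-1}(y)=\lambda^{-1}(G(y)-g(y))$. Before that, $G$ itself must be chosen as a smooth function whose derivative equals the prescribed $c_i$ on the orbit. The points to verify are:
\begin{itemize}
\item the recursively defined pieces match smoothly across the endpoints of successive fundamental domains, which requires an initial compatibility condition on $J$ of Herman's type;
\item the resulting map stays increasing;
\item as the iterates accumulate on $x_i$ from the right and from the left, the slopes converge to the two different Riccati fixed points.
\end{itemize}
The last point uses the hyperbolicity of the Möbius recurrence: forward iteration selects one root and backward iteration selects the other. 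Then $g\in D^0(\mathbb{T})$, $g$ is smooth off the orbit, $g$ is not differentiable at the $x_i$, and $G$ is $C^\infty$ by construction.

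\textbf{Main obstacle.} The hardest step is reconciling the exact equality \eqref{Gcond} with global $C^\infty$ smoothness of $G$ at the accumulation points, while also controlling $\|\phi\|_{C^0}$. It is easy to get $G$ smooth with some supremum of $G'$; the difficulty is that the Riccati dynamics near the orbit needs $c_i>2\sqrt\lambda$ and a genuine gap between the roots, whereas $G'$ must never exceed $2-2\sqrt\lambda+2\lambda$ anywhere. My first attempt would be to take $G'$ constant equal to $1+\lambda+(1-\sqrt\lambda)^2$ in a neighbourhood of the whole orbit, so that the maximum is attained there. Then $g$ is affine on each side near $x_i$, with slopes equal to the two roots of $a^2-(2-2\sqrt\lambda+2\lambda)a+\lambda=0$; these roots are distinct because $2-2\sqrt\lambda+2\lambda>2\sqrt\lambda$ for $\lambda\neq1$. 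Away from the orbit I would let $G'$ decrease, making $G'$ locally constant near the orbit to kill all higher-order matching conditions. I would then check that the compatibility condition on the fundamental domain can be met while keeping $G'$ below the threshold.
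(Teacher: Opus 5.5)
\textbf{There is a genuine gap: the global step is where the proof lives, and as you set it up it does not work.} You fix $G$ first and then recover $g$ by propagating $g=G-\lambda g^{-1}$ from a fundamental domain $J$. You claim that hyperbolicity of the Riccati recurrence makes the one-sided slopes converge to the two roots. It pushes the other way.

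Along an orbit $x_n=g^n(y)$, the slopes $s_n=g'(x_n)$ satisfy $s_n=G'(x_n)-\lambda/s_{n-1}$. Where $G'\equiv M_\lambda:=1+\lambda+(1-\sqrt\lambda)^2$, this is the map $f(s)=M_\lambda-\lambda/s$. Its fixed points are the roots $a>1>\sqrt\lambda>b=\lambda/a$, with $f'(a)=b/a<1$ and $f'(b)=a/b>1$.

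Forward orbits approach the periodic orbit from the side where $g$ contracts. There the slope must be $b$, which is the \emph{repelling} fixed point of $f$. Backward orbits approach from the expanding side, where the slope must be $a$, which is repelling for $f^{-1}$. So generic data on $J$ produce slopes drifting to the wrong root, and the propagated pieces cannot converge to the orbit.

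Geometrically, the orbit points are saddles of $F^q$ with eigenvalues $a^q>1>b^q$. The arc of the graph over a gap must be a branch of $W^u$ of one orbit point that coincides with a branch of $W^s$ of the next. The unstable branch is determined by $G$, so there is actually no freedom on $J$. Whether the construction closes up is a condition on $G$ itself, not a compatibility condition on $J$. Your plan gives no mechanism for enforcing it while keeping $G'\le M_\lambda$, and the third item on your checklist is false as stated.

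\textbf{The missing idea is to reverse the order: construct $g$ explicitly and only then set $G:=g+\lambda g^{-1}$.} Then the connection is automatic. The paper puts the break points on the orbit of the rigid rotation $x\mapsto x+p/q$. Its gaps are permuted isometrically, so every gap can carry the same affinely rescaled model $h(u)=\int_0^u\psi$. Here $\psi$ is smooth and decreasing from $a$ to $b$, with $\int_0^1\psi=1$. This gives $\rho(g)=p/q$ and corners (left slope $b$, right slope $a$) at the break points.

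For $x=g(y)$, with $y$ at normalised position $u$ in its gap,
$G'(x)=\psi(h(u))+\lambda/\psi(u)\le\psi(u)+\lambda/\psi(u)\le M_\lambda$.
This uses $h(u)\ge u$, the monotonicity of $\psi$, and $t+\lambda/t\le M_\lambda$ on $[b,a]$. Equality holds at the break points.

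Your local idea fits neatly into this. Taking $\psi\equiv a$ near $0$ and $\psi\equiv b$ near $1$ makes $G'\equiv M_\lambda$ near the orbit. Then $G\in C^\infty$ follows at once, and the paper's Gevrey-flat template becomes unnecessary.

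\textbf{Side remark (outside the proposition).} Your claim that \eqref{Gcond} yields $\|\phi'\|_{C^0}=(1-\sqrt\lambda)^2$ is false, because \eqref{Gcond} bounds $\phi'$ only from above. Suppose $G'\ge 2\sqrt\lambda$ held everywhere. Then $s_{n-1}\ge\sqrt\lambda$ would force $s_n\ge\sqrt\lambda$, so slopes starting near $a$ at one end of a gap could never decrease to $b$ at the other. Hence any such corner construction has $\inf\phi'<-(1-\sqrt\lambda)^2$. This also bears on how the proposition is used for Theorem~\ref{Th1}.
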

	
	\vspace{1ex}
	\begin{Remark}
		According to \eqref{gdiff}, the derivative of \(\phi\) can be obtained from \(g\). Since \(\int_{0}^{1} \phi(x)\,dx = 0\), the function \(\phi\) is uniquely determined by its derivative. Furthermore, from \eqref{eq:main} we can determine
		\[
		\tilde{\alpha} := (1-\lambda)\alpha_1 + \lambda\alpha_2.
		\]
		Hence \(\alpha_1\) and \(\alpha_2\) are not unique. For simplicity, one can take \(\alpha_2 = 0\), in which case the corresponding dissipative twist map is an exact conformally symplectic twist map.
	\end{Remark}
	
	The remainder of the section is devoted to the proof of Proposition~\ref{hf1}. The proof consists of two parts. In the first part, we construct a \(C^1\) function \(G\) satisfying all the other requirements of Proposition~\ref{hf1}. In the second part, we refine the argument of the first part to construct a \(C^\infty\) function \(G\). To make the construction more quantitative, for any \(s > 1\), we  provide a construction of \(G\) in the Gevrey-\(s\) class.
	
	\subsection{The \(C^1\) case}
	The concrete construction is carried out in the following six steps.
	
	\vspace{1ex}
	
	\textit{Step 1. Constants and notations}
	
	For fixed $\lambda \in (0,1)$, we define
	\begin{equation}
		M_{\lambda}:=1+\lambda+(1-\sqrt{\lambda})^2 = 2+2\lambda-2\sqrt{\lambda}.
	\end{equation}
	
	Consider the quadratic equation $z+\lambda/z = M_{\lambda}$, i.e. $z^2-M_{\lambda}z+\lambda=0$. Its discriminant is $\Delta = M_{\lambda}^2-4\lambda >0$, and the two positive roots are
	\[
	a:=\frac{M_{\lambda}+\sqrt{\Delta}}{2},\quad b:=\frac{M_{\lambda}-\sqrt{\Delta}}{2}.
	\]
	
	One checks directly that $a>1>\sqrt\lambda>b>0$, $ab=\lambda$, and
	\begin{equation}
		a+\frac{\lambda}{a}=b+\frac{\lambda}{b}=M.
	\end{equation}
	
	In the sequel the numbers $a$ and $b$ will serve as the maximal and minimal possible values of the derivative of the homeomorphism $g$ that we are going to build.
	
	\vspace{1ex}
	
	\textit{Step 2. Partition induced by a rational rotation.}
	
	Let $\beta=p/q$ in lowest terms. Assume $q\geq 2$ (the case $q=1$ is trivial and can be obtained analogously). Let $R(x)=x+p/q$ be the rigid rotation on the circle $\mathbb{T}=\mathbb{R}/\mathbb{Z}$ and $\tilde{R}$ be its lift. The orbit of $0$ under $R$ consists of the points
	\[
	\left\{\left\{\frac{kp}{q}\right\} : k=0,1,\dots,q-1 \,\right\}.
	\]
	Then we order them increasingly and denote them by
	\[
	0=y_0<y_1<\dots<y_{q-1}<1.
	\]
	Set $y_q:=y_0+1=1$ for convenience. Define the lengths $L_k:=y_{k+1}-y_k$ for $k=0,\dots,q-1$; clearly $\sum_{k=0}^{q-1}L_k=1$. By construction, for each $k$ there exists a unique $\sigma(k)\in\{0,\dots,q-1\}$ and an integer $m_k\in\{0,1\}$ such that
	\begin{equation}
		\tilde{R}(y_k)=y_k+\frac{p}{q}=y_{\sigma(k)}+m_k.
	\end{equation}
	Note that $\sigma$ is a $q$-cycle. Moreover, the sum of the $m_k$ equals $p$. A crucial observation is that $L_k = L_{\sigma(k)}$ for all $k$; this follows from the fact that $R$ is an isometry, so it maps the interval $[y_k,y_{k+1})$ onto an interval of the same length, which must be some $[y_{\sigma(k)},y_{\sigma(k)+1})$.
	
	\vspace{1ex}
	
	\textit{Step 3. A smooth template function}
	
	\begin{Lemma} \label{lemma1}
		There exists a smooth strictly increasing function $\varphi_c:[0,1]\to[0,1]$ such that
		\[
		\varphi_c(0)=0, \varphi_c(1)=1, and \int_0^1\varphi_c(x)\,dx = c \text{ for any prescribed } c\in(0,1)
		\]
	\end{Lemma}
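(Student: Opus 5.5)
The plan is to write down an explicit one-parameter family of smooth increasing bijections of $[0,1]$ and then use continuity of the integral in the parameter. Consider the power functions $x\mapsto x^s$ with $s>0$. Each is strictly increasing on $[0,1]$ and fixes $0$ and $1$, and
\[
\int_0^1 x^s\,dx=\frac{1}{s+1},
\]
which sweeps out all of $(0,1)$ as $s$ ranges over $(0,\infty)$. The defect is that $x^s$ fails to be smooth at $0$ unless $s$ is a positive integer. So I would replace it by a family that is smooth up to the endpoints but still sweeps all of $(0,1)$.

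A good candidate is a normalized exponential profile. For $t\in\mathbb{R}\setminus\{0\}$ set
\[
\varphi^{(t)}(x):=\frac{e^{tx}-1}{e^{t}-1},
\]
and set $\varphi^{(0)}(x)=x$. Each $\varphi^{(t)}$ is real-analytic, hence $C^\infty$, on $[0,1]$. It satisfies $\varphi^{(t)}(0)=0$ and $\varphi^{(t)}(1)=1$. Its derivative is $te^{tx}/(e^t-1)>0$, so it is strictly increasing.

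Next compute the integral
\[
I(t):=\int_0^1\varphi^{(t)}(x)\,dx=\frac{1}{e^t-1}\Bigl(\frac{e^t-1}{t}-1\Bigr)=\frac1t-\frac{1}{e^t-1},
\]
with $I(0)=1/2$. The function $I$ is continuous on $\mathbb{R}$; the singularity at $0$ is removable, since the Taylor expansion gives $\tfrac1t-\tfrac1{e^t-1}=\tfrac12-\tfrac t{12}+O(t^2)$. For the limits: as $t\to+\infty$, $I(t)\to 0$; as $t\to-\infty$, $-\tfrac1{e^t-1}\to 1$ and $\tfrac1t\to0$, so $I(t)\to1$. Also $0<\varphi^{(t)}(x)<1$ for $x\in(0,1)$, so $I(t)\in(0,1)$ for every $t$.

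By the intermediate value theorem, for any $c\in(0,1)$ there is a $t_c$ with $I(t_c)=c$. Setting $\varphi_c:=\varphi^{(t_c)}$ then proves the lemma. As an aside, $I$ is strictly decreasing, because $\varphi^{(t)}(x)$ is pointwise decreasing in $t$; this makes $t_c$ unique, but uniqueness is not needed.

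The only real obstacle is the removable singularity at $t=0$ together with the endpoint limits, and both are settled by the elementary expansions above. If the later construction also requires control of the derivative, for instance so that $\varphi_c'$ stays in a prescribed range, this family is still convenient: $\varphi_c'$ varies monotonically between $t/(e^t-1)$ and $te^t/(e^t-1)$.
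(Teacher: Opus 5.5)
Your proof is correct, but it follows a different route from the paper.

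The paper does not write down a family. It takes the convex (hence connected) set $\mathcal{H}$ of smooth $\varphi$ on $[0,1]$ with $\varphi(0)=0$, $\varphi(1)=1$ and $\varphi'>0$ on $(0,1)$. It notes that $\varphi\mapsto\int_0^1\varphi$ is continuous with values in $(0,1)$. It then uses $x^n$ and $1-(1-x)^n$ (integrals $\frac{1}{n+1}$ and $\frac{n}{n+1}$) to conclude that the image of $\mathcal{H}$ is all of $(0,1)$.

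You instead reduce everything to the intermediate value theorem for the explicit function $I(t)=\frac1t-\frac{1}{e^t-1}$. Your checks are complete: positivity of $te^{tx}/(e^t-1)$ for both signs of $t$, the removable singularity at $t=0$, and the limits at $\pm\infty$.

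Your route gives a closed-form answer with a unique parameter $t_c$ and needs no topology on function spaces. It also produces a $\varphi_c$ with $\varphi_c'>0$ on the whole closed interval, i.e.\ an analytic diffeomorphism of $[0,1]$; the class $\mathcal{H}$ only asks for this on $(0,1)$.

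The paper's soft argument is instead robust under changing the admissible class. It works in any convex class once one exhibits members with integrals near $0$ and near $1$. The paper invokes this same mechanism again for the Gevrey-flat templates in the Gevrey subsection. There your family cannot be used as it stands, since $\varphi^{(t)}$ is not flat at the endpoints. If you wanted your approach there, the composition $\varphi^{(t)}\circ\varphi_0$ with the paper's flat $\varphi_0$ works, with the limits $t\to\pm\infty$ obtained by dominated convergence.

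Conversely, the paper's argument can be made as explicit as yours: the integral of $s\,x^n+(1-s)\bigl(1-(1-x)^n\bigr)$ is affine in $s$.
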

	\begin{proof}
		Consider the set \[\mathcal{H}=\{\varphi\in C^\infty([0,1]): \varphi(0)=0,\ \varphi(1)=1,\ \varphi'>0\text{ on }(0,1)\}\] endowed with the $C^1$ topology. The map $\Phi:\mathcal{H}\to(0,1)$ defined by $\Phi(\varphi)=\int_0^1\varphi$ is continuous. Take $\varphi_n(x)=x^n$; then $\Phi(\varphi_n)=1/(n+1)\to0$. Take $\varphi_n(x)=1-(1-x)^n$; then $\Phi(\varphi_n)=1-1/(n+1)\to1$. Since $\mathcal{H}$ is connected (it is convex), containing values arbitrarily close to $0$ and $1$, and $\Phi(\phi)$ can't attain $0$ and $1$, hence $\Phi(\mathcal{H})=(0,1)$. Therefore for any $c\in(0,1)$ there exists $\varphi_c\in\mathcal{H}$ with $\Phi(\varphi_c)=c$.
	\end{proof}
	By Lemma \ref{lemma1}, we can construct a smooth strictly decreasing function $\psi:[0,1]\to[b,a]$ such that
	\[
	\psi(0)=a,\quad \psi(1)=b,\quad \int_0^1\psi(t)\,dt=1.
	\]
	
	More precisely, take $\psi(t)=a+(b-a)\varphi_c(t)$ with $0<c<1$ to be determined. Then
	\[
	\int_0^1\psi(t)\,dt = a + (b-a)c.
	\]
	Setting this equal to $1$ gives
	\[
	(b-a)c=1-a\quad\Longrightarrow\quad c=\frac{1-a}{b-a}.
	\]
	Since $a>1>b$, $0<\frac{1-a}{b-a}<1$, so solving for $c$ yields $0<c<1$. Thus $\psi$ is well defined, $C^\infty$, strictly decreasing, and satisfies the required properties.
	
	\vspace{1ex}
	
	\textit{Step 4. Construction of $g$ and its derivative}
	
	On each interval $[y_k,y_{k+1})$ define
	\begin{equation}
		g'(y):=\psi\!\left(\frac{y-y_k}{L_k}\right),\qquad y\in[y_k,y_{k+1}).
	\end{equation}
	Then set
	\begin{equation}
		g(y):=\frac{p}{q}+\int_0^y \tilde{g}(t)\,dt\qquad\text{for }0\le y\le1,
	\end{equation}
	and extend $g$ to $\mathbb{R}$ by $g(y+1)=g(y)+1$. Because $\int_0^1 g'(t)dt=\sum_k L_k\cdot1=1$, we have $g(1)=g(0)+1$. The function $g$ is strictly increasing and continuous. On each open interval $(y_k,y_{k+1})$, the function $g$ is of class $C^\infty$. At the partition points $y_k$ the left derivative equals $b$ and the right derivative equals $a$; since $a\neq b$, $g$ is not differentiable at $y_k$. Hence $g\notin C^1$.
	
	\vspace{1ex}
	
	\textit{Step 5. Rotation number of $g$}
	
	From the definition we have $g(y_k)=y_{\sigma(k)}+m_k$ (this follows by integrating $g'$ over $[0,y_k]$ and using that each subinterval contributes exactly its length $L_i$). Because $\sigma$ is a $q$-cycle and the map sends intervals to intervals of the same length, iterating $q$ times gives
	\[
	g^q(y_k)=y_k+\sum_{i=0}^{q-1}m_{\sigma^i(k)}.
	\]
	The sum $\sum_{i=0}^{q-1}m_{\sigma^i(k)}$ is independent of $k$ and equals $p$. In particular $g^q(0)=p$. Therefore the lift $g$ has rotation number $\rho(g)=p/q$.
	
	\vspace{1ex}
	
	\textit{Step 6. Differentiability of $G$ and the sharp upper bound for $G'$}
	
	Because $g$ is strictly increasing and bijective, $g$ has an inverse, denoted by $g^{-1}$. Then $g^{-1}$ is also a homeomorphism and satisfies $g^{-1}(x+1)=g^{-1}(x)+1$. Define
	\[
	G(x):=g(x)+\lambda g^{-1}(x),\quad \text{for } x\in \mathbb{R}.
	\]
	
	Let $x\in\mathbb{R}$. There exists $y\in[0,1)$ and $n\in\mathbb{Z}$ such that $x=g(y)+n$. Since $g'$ is $1$-periodic and $(g^{-1})'(x)=1/g'(g^{-1}(x))$, we obtain
	\[
	G'(x)=g'(x)+\lambda(g^{-1})'(x)=g'(g(y))+\frac{\lambda}{g'(y)}.
	\]
	
	Now we prove a useful inequality.
	
	\begin{Lemma}\label{lemma2}
		For the template function $\psi$ constructed in Step~3, one has
		\begin{equation}
			\int_0^u\psi(t)\,dt\ge u\qquad\text{for all }u\in[0,1].
		\end{equation}
	\end{Lemma}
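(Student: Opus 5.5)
Let $H(u):=\int_0^u\psi(t)\,dt-u$ for $u\in[0,1]$; the claim is that $H\ge 0$ on $[0,1]$. The natural route is a concavity argument using only the properties of $\psi$ from Step~3: $\psi$ is smooth and strictly decreasing on $[0,1]$, with $\psi(0)=a$, $\psi(1)=b$ and $\int_0^1\psi=1$.

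First I will record the boundary values. We have $H(0)=0$, and $H(1)=\int_0^1\psi(t)\,dt-1=0$ by the normalization $\int_0^1\psi=1$ chosen in Step~3 through $c=\frac{1-a}{b-a}$.

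Next, $H$ is $C^1$ with $H'(u)=\psi(u)-1$, and $H'$ is strictly decreasing because $\psi$ is. Hence $H$ is concave on $[0,1]$. A concave function lies above the chord joining its endpoint values, so
\[
H(u)\ \ge\ (1-u)H(0)+uH(1)\ =\ 0\qquad\text{for all }u\in[0,1].
\]
This gives $\int_0^u\psi(t)\,dt\ge u$.

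If one prefers to avoid quoting concavity, there is an elementary version. Since $\psi(0)=a>1>b=\psi(1)$ and $\psi$ is strictly decreasing and continuous, there is a unique $u_0\in(0,1)$ with $\psi(u_0)=1$. Then $H'>0$ on $[0,u_0)$ and $H'<0$ on $(u_0,1]$. So $H$ increases from $H(0)=0$ up to $u_0$, and then decreases to $H(1)=0$. In particular $H\ge0$ throughout.

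There is no real obstacle here. The only points to check are that the normalization really gives $H(1)=0$, and that monotonicity of $\psi$ is strict, which it is by construction. Equality in the claim holds exactly at $u=0$ and $u=1$, a fact that may be useful later when the bound $G'\le M_\lambda$ is attained only at the partition points.
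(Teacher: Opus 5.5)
Your proof is correct and follows the paper's argument. The paper sets $\Phi_*(u)=\int_0^u\psi(t)\,dt-u$, notes $\Phi_*(0)=\Phi_*(1)=0$ and $\Phi_*'=\psi-1$, and uses the unique point where $\psi=1$ to show $\Phi_*$ increases and then decreases, which is exactly your ``elementary version''; your concavity formulation is the same observation stated more compactly.
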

	
	\begin{proof}
		Define $\Phi_*(u)=\int_0^u\psi(t)\,dt-u$. Then $\Phi_*(0)=0$ and $\Phi_*'(u)=\psi(u)-1$. Since $\psi$ is strictly decreasing with $\psi(0)=a>1$ and $\psi(1)=b<1$, there exists a unique $c\in(0,1)$ such that $\psi(c)=1$. Hence $\Psi$ increases on $[0,c]$ and decreases on $[c,1]$. Because $\Phi_*(1)=\int_0^1\psi-1=0$ and $\Phi_*(0)=0$, we conclude $\Phi_*(u)\ge0$ for all $u$.
	\end{proof}
	
	\vspace{1ex}
	
	Now take any $y$ in some interval $(y_k,y_{k+1})$ and set $u=(y-y_k)/L_k\in(0,1)$. Then $g'(y)=\psi(u)$. Using the definition of $g$,
	\[
	g(y)=m_k+y_{\sigma(k)}+\int_{y_k}^y\psi\!\left(\frac{t-y_k}{L_k}\right)dt
	=m_k+y_{\sigma(k)}+L_k\int_0^u\psi(t)\,dt.
	\]
	Because $L_k=L_{\sigma(k)}$, the point $g(y)$ lies in the interval $[m_k+y_{\sigma(k)},m_k+y_{\sigma(k)+1})$ and its normalised coordinate is $\int_0^u\psi(t)dt$. Consequently,
	\[
	g'(g(y))=\psi\!\left(\int_0^u\psi(t)dt\right).
	\]
	
	By the Lemma \ref{lemma2}, $\int_0^u\psi\ge u$, and since $\psi$ is decreasing we obtain
	\[
	g'(g(y))=\psi\!\left(\int_0^u\psi\right)\le\psi(u)=g'(y).
	\]
	This inequality also holds at the partition points by continuity of the left and right hand sides. Therefore for every $y\in[0,1]$ we have $g'(g(y))\le g'(y)$.
	
	Using this,
	\[
	G'(x)\le g'(y)+\frac{\lambda}{g'(y)}.
	\]
	Since $g'(y)\in[b,a]$, and the function $h(t)=t+\lambda/t$ attains its maximum on $[b,a]$ at the endpoints with $h(a)=h(b)=M_{\lambda}$, we conclude
	\[
	G'(x)\le M_{\lambda}\qquad\text{for all }x\in\mathbb{R}.
	\]
	
	Finally, consider $x=y_0=0$. Then $g(0)=p/q$, $g'(0^+)=a$, and $g'(g(0))=g'(\frac{p}{q})=a$, so
	\[
	G'(0^+)=a+\frac{\lambda}{a}=M_{\lambda}.
	\]
	The left-hand limit gives the same value. Hence $G'$ is continuous at $0$ and actually everywhere, because at every partition point the left and right limits of $G'$ equal $M_{\lambda}$. Thus $G\in C^1(\mathbb{R})$ and $\sup G' = M_{\lambda}$.
	
	This completes the proof of the $C^1$ case.

	\subsection{From \(C^1\) to Gevrey regularity}
	
	The construction in Proposition~\ref{hf1} used a smooth template function $\psi$ with $\psi(0)=a$, $\psi(1)=b$ and $\int_0^1\psi=1$, but no further conditions at the endpoints. Consequently the resulting $G$ was only proved to be $C^1$. To obtain Gevrey regularity for $G$ for any exponent $s>1$ we replace the function space for the auxiliary function $\varphi$ (which defines $\psi=a+(b-a)\varphi$) by one whose elements are flat at the endpoints in the Gevrey sense.
	
	Recall that a \(C^\infty\) function \(f\) on an interval belongs to the Gevrey class \(G^s\) (\(s>1\)) if for every compact set \(K\) there exist constants \(C,R>0\) such that
	\[
	\sup_{x\in K}|f^{(k)}(x)|\le C\, k!^{\,s}R^{-k}\qquad\forall k\ge0.
	\]
	
	For \(s\leq 1\) this gives real analyticity; for \(s>1\) the class is strictly larger than analytic and still closed under composition, inversion, algebraic operations when the involved functions are bounded away from zero (see for instance \cite[Appendix B]{BF}).
	
	%
		%
	
	\vspace{1ex}

	We modify Step~3 of the proof of the \(C^1\) case.
	
	\vspace{1ex}
	
	\subsubsection{Gevrey flat template}
	\begin{Lemma}
		Let \(s>1\). Define
		\[
		\xi_s(x):=
		\begin{cases}
			\exp\!\left(-\bigl(x(1-x)\bigr)^{-\frac{1}{s-1}}\right), & x\in(0,1),\\
			0, & x=0,1.
		\end{cases}
		\]
		Then the following hold:
		\begin{enumerate}
			\item[(i)] \(\xi_s\in C^\infty([0,1])\), \(\xi_s(x)>0\) for \(x\in(0,1)\), and
			\[
			\xi_s^{(k)}(0)=\xi_s^{(k)}(1)=0\qquad\forall k\ge0.
			\]
			\item[(ii)] \(\xi_s\in G^s([0,1])\), i.e. there exist constants \(C,R>0\) such that
			\[
			\sup_{x\in[0,1]}|\xi_s^{(k)}(x)|\le C\, k!^{\,s}R^{-k}\qquad\forall k\ge0.
			\]
			
		\end{enumerate}
	\end{Lemma}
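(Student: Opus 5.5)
We need to prove (i) smoothness and flatness of $\xi_s$ at the endpoints, and (ii) a Gevrey-$s$ bound for its derivatives on $[0,1]$.

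\emph{Step 1: smoothness and flatness.} Set $\gamma:=\frac{1}{s-1}>0$ and $u(x):=x(1-x)$, so $\xi_s=e^{-u^{-\gamma}}$. On $(0,1)$ the function $\xi_s$ is a composition of smooth maps and is strictly positive. Using the symmetry $\xi_s(x)=\xi_s(1-x)$, it suffices to study the endpoint $x=0$. By induction, every derivative on $(0,1)$ has the form
\[
\xi_s^{(k)}(x)=P_k\bigl(x,u(x)^{-1}\bigr)\,\xi_s(x)
\]
for some polynomial $P_k$, since each derivative of $u^{-\gamma}$ is a polynomial in $x$ times a power of $u^{-1}$. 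Because $e^{-u^{-\gamma}}$ decays faster than any power of $u$ as $u\to0^+$, each $\xi_s^{(k)}(x)\to0$ as $x\to0^+$. Induction on $k$, combined with the mean value theorem applied to difference quotients at $0$, then gives $\xi_s^{(k)}(0)=0$ and $\xi_s\in C^\infty([0,1])$.

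\emph{Step 2: the Gevrey bound.} I would use Cauchy estimates on complex discs. Fix $x\in(0,1/2]$ and take the disc $D_x=\{z:|z-x|\le x/2\}$. On $D_x$ we have $\Re z\ge x/2$ and $|z|\le 3x/2$, and $1-z$ stays close to $1$. Hence $\Re\bigl(z(1-z)\bigr)^{-\gamma}\ge c\,x^{-\gamma}$ for some $c>0$ depending only on $s$: the argument of $z(1-z)$ is bounded by a fixed angle strictly below $\pi/(2\gamma)$, after possibly shrinking the radius to $\delta x$ with $\delta=\delta(s)$ small. Then $|\xi_s(z)|\le e^{-c x^{-\gamma}}$ on the disc, and the Cauchy estimate yields
\[
|\xi_s^{(k)}(x)|\le k!\,(\delta x)^{-k}e^{-c x^{-\gamma}}.
\]
Next we maximize over $x$. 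With $t=x^{-1}$, the quantity $t^k e^{-ct^{\gamma}}$ is maximized at $t^\gamma=k/(c\gamma)$, which gives the bound $(k/(c\gamma e))^{k/\gamma}$. Since $1/\gamma=s-1$ and $k^{k}\le e^k k!$, this is at most $C_1^k\,k!^{\,s-1}$. Together with the factor $k!$ from the Cauchy estimate this produces $k!^{\,s}R^{-k}$. The symmetric endpoint is handled in the same way, and the endpoint values themselves vanish by (i).

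\emph{Main obstacle.} The delicate point is the uniform sector control: we must ensure that on a disc of radius proportional to $x$ the real part of $(z(1-z))^{-\gamma}$ stays comparable to $x^{-\gamma}$. When $\gamma$ is large (that is, $s$ close to $1$), the admissible angle $\pi/(2\gamma)$ is small, so $\delta$ must be chosen depending on $s$. Concretely, I would pick $\delta$ such that $|\arg z|\le\arcsin\delta<\pi/(4\gamma)$ and so that $\arg(1-z)$ contributes a negligible angle for $x\le1/2$. Then $\cos(\gamma\arg)\ge\cos(\pi/4)$, and $|z(1-z)|\le 2x$ gives $c=2^{-\gamma}\cos(\pi/4)$. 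With these choices, the constants $C$ and $R$ depend only on $s$, as required.
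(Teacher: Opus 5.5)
Your proposal is correct and follows essentially the paper's route: flatness from writing $\xi_s^{(k)}$ as a finite sum of terms $p(x)\,u^{-\beta}\xi_s$, then Cauchy estimates on discs of radius proportional to $x$, followed by maximizing $t^{k(s-1)}e^{-t}$. In Step 1 the exponents $\beta$ are real rather than integers, which is harmless.

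Your version is in fact more careful than the paper's in two places. The paper factors $\xi_s(x)=H(x)H(1-x)$ with $H(x)=e^{-x^{-1/(s-1)}}$, which is not valid since $(x(1-x))^{-\gamma}\neq x^{-\gamma}+(1-x)^{-\gamma}$. It also uses the fixed radius $x/2$, which keeps $\gamma|\arg z|<\pi/2$ only for $s>4/3$; your $s$-dependent radius $\delta x$ works for all $s>1$. To justify the constant $\cos(\pi/4)$, choose $\arcsin\delta\le\pi/(8\gamma)$, because $|\arg(1-z)|$ is not negligible: it can reach $\arcsin\delta$ near $x=1/2$.
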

	
	\begin{proof}
		We prove each part in turn.
		
		\medskip
		\noindent\textit{Proof of (i).} Note that $\xi_s$ is positive and analytic on $(0,1)$, which yields that $\xi_s\in C^\infty((0,1))$. By symmetry it suffices to examine the behaviour near \(x=0\).
		
		Define \(H(x)=e^{-x^{-\frac{1}{s-1}}}\) for \(x>0\), then \[\xi_s(x)=H(x)H(1-x).\]Set $t=x^{-\frac{1}{s-1}}$, then $x=t^{-(s-1)}$ and \[x^{-m}H(x)=t^{(s-1)m}e^{-t}\text{ for any integer } m\ge0.\]  Since $e^t$ increases faster than any power of $t$ as $t$ goes to infinity, so we have \[\lim_{t\to \infty}t^{(s-1)m}e^{-t}=0,\]which yields that \[\lim_{x\to0^+} x^{-m}H(x)=0.\]
		
		By induction, \(H^{(k)}(x)\) is a finite linear combination of terms of the form \(x^{-n}H(x)\) with \(n\ge0\). Hence \[H^{(k)}(0^+)=0\text{ for every } k.\]Since $H(x)$ is analytic at $x=1$, by the Cauchy estimate, we obtain that \[H^{(k)}(1)=0\text{ for every } k.\]
		
		Therefore, by the Leibniz rule and symmetry, we obtain that \[\xi_s^{(k)}(0)=0,\; \xi_s^{(k)}(1)=0\text{ for every } k.\]
		
		\medskip
		\noindent\textit{Proof of (ii).} We only need to derive the Gevrey estimate near \(0\); near \(1\) it follows by symmetry.
		
		The function \(H(z)=e^{-z^{-\frac{1}{s-1}}}\) is holomorphic in the sector \(\Sigma_\theta=\{z\in\mathbb C: |\arg z|<\theta\}\) for any \(\theta<\pi (s-1)/2\). Fix such a \(\theta\) and choose a constant \(c_\theta>0\) such that for \(z\) with \(|z-x|\le x/2\) and \(x\in(0,1/2)\), we have $\Re(z^{-\frac{1}{s-1}})\ge c_\theta x^{-\frac{1}{s-1}}$, then \[|H(z)|\leq e^{-c_\theta x^{-\frac{1}{s-1}}}.\]
		For \(x\in(0,1/2)\), let \(\gamma_x\) be the circle centered at \(x\) with radius \(r=x/2\). By Cauchy's integral formula,
		\[
		|H^{(k)}(x)| = \left|\frac{k!}{2\pi i}\int_{\gamma_x}\frac{H(z)}{(z-x)^{k+1}}\,dz\right|
		\le \frac{k!}{r^k}\sup_{z\in \gamma_x}|H(z)|
		\le k!\left(\frac{2}{x}\right)^k e^{-c_\theta x^{-\frac{1}{s-1}}}.
		\]
		Set \(t=c_\theta x^{-\frac{1}{s-1}}\). Then \(x^{-k}=(t/c_\theta)^{k(s-1)}\) and
		\[
		|H^{(k)}(x)| \le 2^k c_\theta^{-k(s-1)} k! \, t^{k(s-1)} e^{-t}.
		\]
		The supremum of \(t^{k(s-1)}e^{-t}\) over \(t>0\) is \((k(s-1))^{k(s-1)}e^{-k(s-1)}\le C_1^k k!^{s-1}\) for some \(C_1>0\). Therefore
		\[
		|H^{(k)}(x)| \le C_2^k k!\, k!^{\,s-1} = C_2^k k!^{\,s},
		\]
		with \(C_2>0\) depending on \(\theta,s\). This proves the Gevrey estimate on \((0,1/2)\). The same estimate on \((1/2,1)\) gives the global bound on \([0,1]\) (with possibly different constants, which can be combined by taking the maximum). Hence \(H\in G^s([0,1])\). Since $\xi_s(x)=H(x)H(1-x)$, so $\xi_s \in G^s([0,1])$.
	\end{proof}
	
	Normalising the integral of $\xi_s$ gives
	\[
	\varphi_0(x)=\frac{\int_0^x\xi_s(t)\,dt}{\int_0^1\xi_s(t)\,dt}.
	\]
	Then \(\varphi_0(0)=0,\ \varphi_0(1)=1,\ \varphi_0'(x)=\frac{\xi_s(x)}{\int_0^1\xi_s(t)\,dt}\), and since \(\xi_s\) is flat at the endpoints, \[\varphi_0^{(k)}(0)=\frac{\xi_s^{(k-1)}(0)}{\int_0^1\xi_s(t)\,dt}=0\] and similarly at \(1\) for all \(k\ge1\). The Gevrey regularity of \(\varphi_0\) follows from that of \(\xi_s\). Thus \(\varphi_0\in G^s([0,1])\) and satisfies the endpoint derivative conditions. Let \(c_0:=\int_0^1\varphi_0\).
	
	To obtain an element with an arbitrarily prescribed integral \(c\in(0,1)\), one use convex combinations and a rescaling argument.
	Let \(\varphi_1(x)=1-\varphi_0(1-x)\); then \(\varphi_1\in G^s([0,1])\) shares the same flatness properties and satisfies \(\int_0^1\varphi_1=1-c_0\).
	Convex combinations of \(\varphi_0\) and \(\varphi_1\) realise every integral value between \(c_0\) and \(1-c_0\).
	For $\varepsilon\in(0,1)$, set
	\[
	\varphi_{0,\varepsilon}(x):=\begin{cases}
		\varphi_0(\frac{x}{\varepsilon}), & x\in(0,\varepsilon),\\
		0, & x\in(\varepsilon,1).
	\end{cases}
	\]
	and set $\varphi_{1,\varepsilon}(x):=1-\varphi_{0,\varepsilon}(1-x)$. We obtain that $\int_{0}^{1}\varphi_{0,\varepsilon}=\varepsilon c_0$, $\int_{0}^{1}\varphi_{1,\varepsilon}=1-\varepsilon c_0$.
	Since $\varepsilon$ can be chose arbitrarily small, the range of attainable integrals can be extended to the whole interval \((0,1)\).
	Consequently, we may fix a function \(\varphi\in G^s([0,1])\) such that
	\[
	\int_0^1\varphi=\frac{1-a}{b-a},
	\]
	and \(\varphi^{(k)}(0)=\varphi^{(k)}(1)=0\) for all \(k\ge 1\).
	Define
	\[
	\psi(t)=a+(b-a)\varphi(t),\qquad t\in[0,1].
	\]
	Then \(\psi\) is strictly decreasing, \(\psi(0)=a\), \(\psi(1)=b\), \(\int_0^1\psi=1\), and \(\psi\in G^s([0,1])\) with \(\psi^{(k)}(0)=\psi^{(k)}(1)=0\) for all \(k\ge 1\).
	
	\vspace{1ex}
	
	\subsubsection{Construction of \(g\) and \(G\)}
	Replace the function \(\psi\) in Steps~2,~4 and~5 of the \(C^1\) case by the new \(\psi\) constructed above.
	The rest of the procedure (the partition \(\{y_k\}\), the definition \(g'(y)=\psi((y-y_k)/L_k)\), etc.) remains unchanged.
	Exactly as before we obtain a circle map \(g\notin C^1\) with rotation number \(p/q\) and \[G(x)=g(x)+\lambda g^{-1}(x)\in C^1.\]
	
	\vspace{1ex}
	
	\subsubsection{Gevrey regularity of \(G\)}

	On each open interval \((y_k,y_{k+1})\), \(g'\) is a composition of \(\psi\) with an affine map, hence \(g\in G^s\) there. It remains to check the behaviour at the break points \(y_k\).
	
	For \(t>0\) small, define
	\[
	\eta(t):=g'(y_k+t)-a.
	\]
	Since \(g'(y_k+t)=\psi(t/L_k)\) and \(\psi\) is flat at \(0\) in the sense that all derivatives vanish, we have
	\[
	\eta(0)=0,\qquad \eta^{(j)}(0)=0\quad\forall j\ge0,
	\]
	and \(\eta\) satisfies the Gevrey estimates.
	Integrating,
	\[
	g(y_k+t)=g(y_k)+\int_0^t g'(y_k+u)\,du
	= y_{\sigma(k)}+m_k+a t+E(t),
	\]
	where \(E(t):=\int_0^t\eta(u)\,du\). Clearly \(E\) is also Gevrey-flat.
	
	Now, since \(g\) maps break points to break points, we have
	\[
	g'(y_{\sigma(k)}+m_k+z)=a+\eta_0(z),
	\]
	where \(\eta_0\) is the analogous flat remainder on the right of \(y_{\sigma(k)}+m_k\), with the same Gevrey properties. Taking \(z=a t+E(t)\), we get
	\[
	g'(g(y_k+t)) = g'(y_{\sigma(k)}+m_k+a t+E(t))
	= a+\eta_0(a t+E(t)).
	\]
	Because \(a t+E(t)\) is a \(G^s\) function and vanishes at \(t=0\), the composition \(\eta_0(a t+E(t))\) is again Gevrey-flat: its derivatives at 0 are zero and the Gevrey estimates are preserved (composition of Gevrey functions is Gevrey). Hence
	\[
	g'(g(y_k+t))=a+\eta_1(t)
	\]
	with \(\eta_1:=\eta_0(a t+E(t))\in G^s\), \(\eta_1^{(j)}(0)=0\) for all \(j\).
	
	Next, consider the term \(\lambda/g'(y_k+t)=\lambda/(a+\eta(t))\). Since \(\eta(0)=0\) and \(a>0\), for small \(t\) we have \(|\eta(t)|<a/2\). Expand:
	\[
	\frac{\lambda}{a+\eta(t)}
	= \frac{\lambda}{a}\cdot\frac{1}{1+\eta(t)/a}
	= \frac{\lambda}{a}\sum_{j=0}^{\infty}\left(-\frac{\eta(t)}{a}\right)^j.
	\]
	The infinite series converges uniformly for small \(t\). For each \(j\), the function \(\eta(t)^j\) is Gevrey-flat, since the product of flat functions remains flat and Gevrey. The sum is also Gevrey-flat because the Gevrey class is closed under uniform limits of functions with uniform Gevrey bounds. Thus we can write
	\[
	\frac{\lambda}{a+\eta(t)} = \frac{\lambda}{a} + \eta_2(t),
	\]
	where \(\eta_2:=\frac{\lambda}{a} \sum_{j=1}^{\infty}\left(-\frac{\eta(t)}{a}\right)^j\in G^s\) and \(\eta_2^{(j)}(0)=0\) for all \(j\).
	
	Combining the two parts, for \(x=g(y_k+t)+n\), we obtain
	\[
	G'(x)=g'(g(y))+\frac{\lambda}{g'(y)}
	= \bigl(a+\eta_1(t)\bigr)+\left(\frac{\lambda}{a}+\eta_2(t)\right)
	= M_\lambda + \eta_*(t),
	\]
	where \(M_\lambda=a+\lambda/a\) and \(\eta_*:=\eta_1+\eta_2\) is Gevrey-flat.
	
	The same computation from the left side (\(y=y_k-t\)) yields
	\[
	G'(x)=M_\lambda+\tilde\eta_*(t),
	\]
	with \(\tilde\eta_*\) also Gevrey-flat. Hence the left and right derivatives of \(G'\) of all orders at \(x_k\) are equal: indeed, for any \(j\ge1\),
	\[
	(G')^{(j)}(x_k^+)=\eta_*^{(j)}(0)=0,
	\]
	and similarly from the left. Thus \(G'\) is \(C^\infty\) at \(x_k\), and moreover the Gevrey estimates for the derivatives of \(G'\) across the break point are satisfied because the flat remainder \(\eta_*\) belongs to \(G^s\). On each open interval, \(G'\) is \(G^s\) as well. Therefore \(G\in G^s(\mathbb{R})\).
	
	\vspace{1ex}
	
	This completes the proof of Proposition~\ref{hf1}.


	\vspace{1em}
	
	\section{\sc Verification of Remark \ref{R2}}\label{R3}
	Corollary~\ref{lexx2} yields \(A \subseteq B\). The NHIM theorem asserts that \(B \subseteq C\). In general, the converse inclusions are false.
	
	\subsection{\(B \neq A\)}
	
	First, we construct an example showing that \(B \neq A\).
	\begin{Proposition}\label{lem:counterexample}
		For every $\lambda\in(0,1)$ there exists $\alpha\in\mathbb{R}^2$ and
		$\phi_*\in C^1(\mathbb{T})$ with zero average such that
		\begin{itemize}
			\item the map $F_*$ admits a $C^1$ invariant graph
			$\Gamma_*=\{(x,\Psi_*(x))\}$ which is $1$-normally hyperbolic,
			\item $\|\phi_*\|_{C^1} > (1-\sqrt{\lambda})^2$.
		\end{itemize}
	\end{Proposition}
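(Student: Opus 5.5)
The approach is to run the Herman--Mather formula (A) backwards. Instead of starting from $\phi$, I will pick a smooth circle diffeomorphism $g\in D^1(\mathbb{T})$ with $g'>\sqrt{\lambda}$ everywhere. I will then define $\phi_*$ through (A), so that both the invariant graph and its normal hyperbolicity come for free from Lemma~\ref{lem:nh-iff}. The only real task is to arrange $\|\phi_*\|_{C^1}>(1-\sqrt\lambda)^2$.

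Concretely, I would take $g(x)=x+\beta+\varepsilon\sin(2\pi x)/(2\pi)$ with a constant $\beta$ and with $\varepsilon<1-\sqrt\lambda$, so that $g'(x)=1+\varepsilon\cos(2\pi x)\ge 1-\varepsilon>\sqrt\lambda$. Set
\[G(x):=g(x)+\lambda g^{-1}(x)-(1+\lambda)x.\]
This $G$ is $1$-periodic and $C^\infty$, since $g$ and $g^{-1}$ are smooth diffeomorphisms commuting with integer translations. Put $\tilde\alpha:=\int_0^1 G$ and $\phi_*:=G-\tilde\alpha$, which has zero average. Then choose $\alpha_2=0$ and $\alpha_1=\tilde\alpha/(1-\lambda)$, so that (A) holds. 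Defining $\Psi_*$ by $g(x)=x+\alpha_1+\lambda\Psi_*(x)+\phi_*(x)$ makes $\Psi_*$ a $C^1$ periodic function. By the equivalence stated after (A) (\cite[Proposition 2.1]{SoW}), its graph is invariant under $F_*$, and the restricted dynamics is exactly $g$. Since $g'>\lambda^{1/2}$, Lemma~\ref{lem:nh-iff} with $r=1$ gives $1$-normal hyperbolicity.

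It remains to show $\|\phi_*\|_{C^1}\ge\|\phi_*'\|_{C^0}>(1-\sqrt\lambda)^2$. By (A'), $\phi_*'(x)=g'(x)+\lambda/g'(g^{-1}(x))-(1+\lambda)$. I will evaluate this at a suitable point. The cleanest choice is to make $g'$ large at a point $x_0$ while $g'$ is small at $g^{-1}(x_0)$. Then
\[\phi_*'(x_0)\approx (1+\varepsilon)+\frac{\lambda}{1-\varepsilon}-(1+\lambda),\]
and as $\varepsilon\to1-\sqrt\lambda$ this tends to $(1-\sqrt\lambda)+\lambda/\sqrt\lambda-\lambda=1-\lambda$. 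That limit is strictly bigger than $(1-\sqrt\lambda)^2$, because $1-\lambda-(1-\sqrt\lambda)^2=2\sqrt\lambda(1-\sqrt\lambda)>0$. So I will take $\varepsilon$ close to $1-\sqrt\lambda$ and choose $\beta$ so that $g$ maps the minimum point $1/2$ of $g'$ to the maximum point $0$ (mod 1). Since $g(1/2)=1/2+\beta$, this just means $\beta=1/2$. With this choice, $g^{-1}(0)=1/2-1=-1/2$, which is a minimum point of $g'$, so $\phi_*'(0)=\varepsilon+\lambda/(1-\varepsilon)-\lambda$ exactly.

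The main obstacle is checking that this exact quantity exceeds $(1-\sqrt\lambda)^2$ for some admissible $\varepsilon<1-\sqrt\lambda$. The function $\varepsilon\mapsto\varepsilon+\lambda\varepsilon/(1-\varepsilon)$ is increasing and continuous. At $\varepsilon=1-\sqrt\lambda$ it equals $1-\lambda$, which is strictly larger than $(1-\sqrt\lambda)^2$. Hence by continuity some $\varepsilon$ slightly below $1-\sqrt\lambda$ works, and that gives the required $\phi_*$. I expect no difficulty beyond this verification and the bookkeeping of $\alpha$; if the evaluation at $0$ turned out to be delicate, one could alternatively use a bump-shaped $g'$ to separate the maximum and minimum more sharply.
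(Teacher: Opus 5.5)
Your proposal is correct and follows essentially the same route as the paper. You build $g$ first with $g'>\sqrt{\lambda}$, put the maximum of $g'$ at $0$ and its minimum at $g^{-1}(0)$, recover $\phi_*$ from (A), and use $\phi_*'(0)=t+\frac{\lambda}{1-t}-\lambda\to 1-\lambda>(1-\sqrt{\lambda})^2$ as $t\to 1-\sqrt{\lambda}$. The only difference is your trigonometric choice $g'(x)=1+\varepsilon\cos(2\pi x)$ in place of the paper's piecewise-linear $p_*$, which even makes $\phi_*$ analytic.

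Invoking Lemma~\ref{lem:nh-iff} with $r=1$ is the right justification for $1$-normal hyperbolicity. The paper instead cites Corollary~\ref{lexx2}, which does not literally apply here, since its hypothesis $\|\phi\|_{C^1}<(1-\sqrt{\lambda})^2$ fails for this $\phi_*$.
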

	
	\begin{proof}
		Define $f_*(x):=x+\frac{\lambda}{1-x}-\lambda.$
		
		Since $f_*(x)$ is continuous on $[0,1-\sqrt{\lambda}]$, and
		\[
		f_*(1-\sqrt{\lambda})=1-\lambda>(1-\sqrt{\lambda})^2,\, f_*(0)=0.
		\]
		There exists a $t\in(0,1-\sqrt{\lambda})$ such that
		\[
		f_*(t)=t+\frac{\lambda}{1-t}-\lambda > (1-\sqrt{\lambda})^2.
		\]
		Set
		\[
		a' = 1+t,\qquad b' = 1-t,
		\]
		so that \(a'>1>b'>\sqrt{\lambda}\) and \(a'+b'=2\).
		
		Define a 1-periodic continuous function $p_*:\mathbb{R}\to (0,\infty)$ on $[0,1]$ by:
		\[
		p_*(x):=\begin{cases}
			a', & 0\leq x\leq \dfrac{1}{8}\\[4pt]
			1-8(a'-1)(x-\dfrac{1}{4}), & \dfrac{1}{8}\le x\le \dfrac{1}{4},\\[4pt]
			1, & \dfrac{1}{4}\le x\le \dfrac{3}{8},\\[4pt]
			1-8(1-b')(x-\dfrac{3}{8}), & \dfrac{3}{8}\le x\le \dfrac{1}{2},\\[4pt]
			b', & \dfrac{1}{2}\le x\le \dfrac{5}{8},\\[4pt]
			1+8(1-b')(x-\dfrac{3}{4}), & \dfrac{1}{8}\le x\le \dfrac{1}{4},\\[4pt]
			1, & \dfrac{3}{4}\le x\le \dfrac{7}{8},\\[4pt]
			1+8(a'-1)(x-\dfrac{7}{8}), & \dfrac{7}{8}\le x\le 1.
		\end{cases}
		\]
		Then we obtain \(\int_0^1 p_*(x)\,dx = 1\) and \(p_*(x)\ge b'\) for all $x$.
		
		Define
		\[
		g_*(x) = d + \int_{0}^{x} p_*(t)\,dt,\qquad x\in\mathbb{R},
		\]
		where the constant \(d\) is chosen so that \(g_*(1/2)=0\).  Since
		\(\int_0^1 p_* = 1\), we have \(g_*(x+1)=g_*(x)+1\); thus \(g_*\) is a \(C^1\)
		diffeomorphism of the line covering a circle diffeomorphism.  Its
		derivative satisfies \(g_*'(x)=p_*(x)\ge b'>\sqrt{\lambda}>0\) for all \(x\), so $g_*$ belongs to $C^1$ and has a $C^1$ inverse $g_*^{-1}$.

		By construction, \(g_*(1/2)=0\), then the preimage of $\zeta_0=0$ under \(g_*\) is
		\(g_*^{-1}(\zeta_0)=1/2\).  Therefore
		\[
		g_*'(\zeta_0)=p_*(0)=a',\qquad
		(g_*^{-1})'(\zeta_0) = \frac{1}{g_*'(g_*^{-1}(\zeta_0))} = \frac{1}{g_*'(1/2)} = \frac{1}{b'}.
		\]

		Set
		\[
		\widetilde\phi(x) = g_*(x) + \lambda g_*^{-1}(x) - (1+\lambda)x.
		\]
		This is a \(1\)-periodic function.  Let \(\bar\phi = \int_0^1 \widetilde\phi\)
		and define \(\phi_* = \widetilde\phi - \bar\phi\); then \(\int_0^1 \phi_*=0\) and
		\(\phi_*'=\widetilde\phi'\).
		At the point \(\zeta_0=0\) we have
		\[
		\phi_*'(0) = a' + \frac{\lambda}{b'} - (1+\lambda) = t + \frac{\lambda}{1-t} - \lambda = f_*(t).
		\]
		By the choice of \(t\), \(f_*(t) > (1-\sqrt{\lambda})^2\), so
		\[\|\phi_*\|_{C^1}= \|\phi_*'\|_\infty > (1-\sqrt{\lambda})^2.\]

		Choose \(\alpha_1\) arbitrarily (e.g. \(\alpha_1=0\)) and pick \(\alpha_2\) so that
		\(\bar\phi = (1-\lambda)\alpha_1 + \lambda\alpha_2\).  Then the Herman-Mather formula
		\[
		g_*(x) + \lambda g_*^{-1}(x) = (1+\lambda)x + (1-\lambda)\alpha_1 + \lambda\alpha_2 + \phi_*(x)
		\]
		holds.  Defining \(\Psi_*(x) = \frac{1}{\lambda}\bigl(g_*(x)-x-\alpha_1-\phi_*(x)\bigr)\)
		yields a \(C^1\) invariant graph \(\Gamma_*=\{(x,\Psi_*(x))\}\).

		Because \(g_*'(x)\ge b'>\sqrt{\lambda}\), by Corollary \ref{lexx2},  \(\Gamma_*\) is \(1\)-normally
		hyperbolic.
	\end{proof}
	\subsection{\(C \neq B\)}
	
	For general systems, it is straightforward to construct counterexamples showing that \(C \neq B\). However, for the standard-like conformal symplectic twist map considered here, such a counterexample is not trivial to obtain, especially if we require that \(\|\phi\|_{C^1} \to 0\) as \(\lambda \to 1^-\). The following lemma provides an explicit construction.
	
	\begin{Proposition}\label{lexx33}
		For every $\lambda\in(0,1)$, one can choose \(\alpha \in \mathbb{R}^2\) and a \(C^1\) perturbation \(\phi^*\) with zero average satisfying
		\begin{equation}\label{m22conx5}
			\|\phi^*\|_{C^1} \leq 3\sqrt{1-\lambda},
		\end{equation}
		such that the map \(F^*\) admits a unique \(C^1\) invariant graph, but this invariant graph is not \(1\)-normally hyperbolic.
	\end{Proposition}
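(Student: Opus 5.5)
Following the strategy of Proposition~\ref{lem:counterexample}, I will work backwards from the dynamics on the graph via the Herman--Mather formula \eqref{eq:main}. By Lemma~\ref{lem:nh-iff} with $r=1$, a $C^1$ invariant graph fails to be $1$-normally hyperbolic exactly when $\min_x g'(x)\le\sqrt{\lambda}$. The plan is therefore:
\begin{enumerate}
\item construct a $C^1$ circle diffeomorphism $g^*\in D^1(\T)$ with $\min g^{*\prime}=\sqrt{\lambda}$, $\int_0^1 g^{*\prime}=1$, and $g^{*\prime}$ close to $1$ in a controlled way;
\item define $\widetilde\phi(x):=g^*(x)+\lambda (g^*)^{-1}(x)-(1+\lambda)x$, which is $1$-periodic and $C^1$, and set $\phi^*:=\widetilde\phi-\int_0^1\widetilde\phi$;
\item choose $\alpha_1=0$ and $\alpha_2$ with $\lambda\alpha_2=\int_0^1\widetilde\phi$, so that \eqref{eq:main} holds;
\item set $\Psi^*:=\frac1\lambda\bigl(g^*-\mathrm{Id}-\alpha_1-\phi^*\bigr)$, which is $C^1$ and, by \cite[Proposition 2.1]{SoW}, gives an invariant graph.
\end{enumerate}
Uniqueness of the invariant graph is automatic: Lipschitz invariant graphs are unique, as noted after Birkhoff's theorem. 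Lemma~\ref{lem:nh-iff} then shows this graph is not $1$-normally hyperbolic.

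For the concrete $g^*$, I take $g^{*\prime}=p^*$, a continuous $1$-periodic trapezoidal profile. It equals $\sqrt{\lambda}$ on a short plateau and rises linearly to $1$, and it has a compensating bump above $1$, of height $1+\delta$, on a set where it exceeds $1$. The parameters are tuned so that $\int_0^1 p^*=1$. The simplest symmetric choice is $p^*=1-\varepsilon\cos(2\pi x)$-like with $\varepsilon=1-\sqrt{\lambda}$: then $\min p^*=\sqrt{\lambda}$ and $\max p^*=2-\sqrt{\lambda}$, and $p^*$ is even smooth. The key step is then estimating
\[
\phi^{*\prime}(x)=p^*(x)+\frac{\lambda}{p^*\bigl((g^*)^{-1}(x)\bigr)}-(1+\lambda),
\]
using $\sqrt{\lambda}\le p^*\le 2-\sqrt{\lambda}$. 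Writing $u=p^*(x)-1$ and $v=p^*((g^*)^{-1}(x))-1$, both in $[-(1-\sqrt\lambda),\,1-\sqrt\lambda]$, we get
\[
\phi^{*\prime}=u+\frac{\lambda}{1+v}-\lambda=u-\frac{\lambda v}{1+v},
\]
and hence
\[
|\phi^{*\prime}|\le (1-\sqrt\lambda)+\frac{\lambda(1-\sqrt\lambda)}{\sqrt\lambda}=(1-\sqrt\lambda)(1+\sqrt\lambda)=1-\lambda.
\]
Since $\phi^*$ is periodic with zero mean, $\|\phi^*\|_{C^0}\le\|\phi^{*\prime}\|_{C^0}$, so $\|\phi^*\|_{C^1}\le 1-\lambda\le 3\sqrt{1-\lambda}$, with room to spare.

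The step needing care is checking that nothing is lost in the case analysis. First, $g^*$ must be a genuine orientation-preserving diffeomorphism, which holds since $p^*\ge\sqrt\lambda>0$. Second, the failure of normal hyperbolicity must really follow from $\min g'=\sqrt\lambda$ rather than $<$. Lemma~\ref{lem:nh-iff} is an iff with strict inequality and the minimum is attained, so that part is fine. The generous constant $3\sqrt{1-\lambda}$ suggests the authors use a less symmetric profile. If the bound above fails for some reason (e.g.\ if I must also ensure $\min g'<\sqrt\lambda$ strictly), I will instead take $\min p^*=\sqrt\lambda-\eta$ with small $\eta$. The same estimate then gives roughly $1-\lambda+O(\eta)$, still below $3\sqrt{1-\lambda}$.
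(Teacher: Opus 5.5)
Your scheme (prescribe $g^{*\prime}$, recover $\phi^*$, $\alpha$, $\Psi^*$ from \eqref{eq:main}, conclude with Lemma~\ref{lem:nh-iff}) is the paper's, and your computations are right. Indeed \eqref{eq:main} holds with $\alpha_1=0$ and $\lambda\alpha_2=\int_0^1\widetilde\phi$, and $|\phi^{*\prime}|=|u-\lambda v/(1+v)|\le 1-\lambda$, which beats the paper's constant $3\sqrt{1-\lambda}$.

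The gap is the last step. You deduce that $\Gamma$ is not $1$-normally hyperbolic from the pointwise fact $\min g^{*\prime}=\sqrt\lambda$, using the ``only if'' half of Lemma~\ref{lem:nh-iff}, and that half is false in general. Its proof identifies $\|DF|_{E^s_x}\|$ with the multiplier $\lambda/g'(x)$. This holds only in a metric where the frame vectors $e_1(x)$, $w(x)$ have constant length. The definition allows any Riemannian structure, and rescaling the frame changes the rates by coboundaries. For instance, let $\ell:=\log g'$, $S_j\ell:=\sum_{i=0}^{j-1}\ell\circ g^i$ and $u:=\frac1n\sum_{j=0}^{n-1}S_j\ell$. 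Declaring $e_1(x)\perp w(x)$ with $|e_1(x)|=e^{u(x)}$ and $|w(x)|=e^{-u(x)}$ gives
\[
m(V_xF)=\exp\bigl(\tfrac1n S_n\ell(x)\bigr),\qquad \|N^s_xF\|=\lambda\exp\bigl(-\tfrac1n S_n\ell(x)\bigr).
\]
Since $\min_x\frac1nS_n\ell(x)\to\min_\mu\int\ell\,d\mu$ over $g$-invariant probability measures, $\Gamma$ is $1$-normally hyperbolic (take $n$ large) as soon as this minimum exceeds $\frac12\log\lambda$. The complement $E^s$ then exists, by the same series that solves the equation for $\tilde\alpha$ in that proof.

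This really happens in your family. Take integration constant $\frac12$, i.e.\ $g^*(x)=\frac12+x-\frac{1-\sqrt\lambda}{2\pi}\sin 2\pi x$. Then $\rho(g^*)=\frac12$ and all invariant measures live on period-two orbits. These orbits are $\{0,\frac12\}$, with $g^{*\prime}(0)\,g^{*\prime}(\frac12)=\sqrt\lambda\,(2-\sqrt\lambda)>\lambda$, and one further orbit along which $g^{*\prime}>1$. So that graph is $1$-normally hyperbolic although $\min g^{*\prime}=\sqrt\lambda$. The paper's own example has the same defect: there $g^{*\prime}(0)\,g^{*\prime}(\frac12)=(1+\delta)\lambda>\lambda$, and $\{g^{*\prime}<1\}$ is disjoint from its image, so every period-two orbit has derivative product $>\lambda$.

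The repair is free, because your $C^1$ bound does not depend on the integration constant. Take $g^*(x)=x-\frac{1-\sqrt\lambda}{2\pi}\sin 2\pi x$, so that $g^*(0)=0$. Then $z_0=(0,\Psi^*(0))$ is a fixed point of $F^*$. The frame computation in the proof of Lemma~\ref{lem:nh-iff} is $DF e_1=g'\,e_1\circ g$ and $DF e_2=\lambda\, e_1\circ g+\frac{\lambda}{g'}e_2$. In the basis $e_1=(1,\Psi^{*\prime}(0))^{\mathsf T}$, $e_2=(0,1)^{\mathsf T}$ it gives
\[
DF^*(z_0)=\begin{pmatrix}\sqrt\lambda & \lambda\\ 0 & \sqrt\lambda\end{pmatrix},
\]
a nontrivial Jordan block. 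Hence $T_{z_0}\Gamma$ has no $DF^*$-invariant complement, so no invariant splitting $T\Gamma\oplus N^s$ exists. Therefore $\Gamma$ is not $1$-normally hyperbolic for any metric. With this choice your argument proves the proposition, in fact with $\|\phi^*\|_{C^1}\le 1-\lambda$.
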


	We prove Proposition~\ref{lexx33} under the assumption that the following Lemma holds.
	
	\begin{Lemma}\label{16p}
		For every $\lambda\in(0,1)$ set $\delta=\sqrt{1-\lambda}$.
		There exist constants $\alpha_1,\alpha_2\in\mathbb{R}$ and $C^1$ functions
		$\phi^*,\Psi^*:\T\to\mathbb{R}$ such that
		\begin{itemize}
			\item[(i)] $\|\phi^*\|_{C^1}\le 3\delta$,
			\item[(ii)] $\displaystyle\int_{\T}\phi^*(x)\,dx=0$,
			\item[(iii)] there exists $\xi_0\in\T$ with $\left(\phi^*\right)'(\xi_0)=-\delta$,
			\item[(iv)] for the associated invariant graph $\Psi^*$ by $F^*$ we have
			\[
			|\left(\Psi^*\right)'(\xi_0)|\le\frac{\delta}{1+\delta}.
			\]
		\end{itemize}
	\end{Lemma}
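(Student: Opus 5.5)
The plan is to build the example through the Herman--Mather formula \eqref{eq:main}, as in Proposition~\ref{lem:counterexample}. I will first pick a $C^1$ circle diffeomorphism $g^*\in D^1(\T)$ and read off $\phi^*$ and $\Psi^*$ from it. Concretely, set
\[
\widetilde\phi(x)=g^*(x)+\lambda (g^*)^{-1}(x)-(1+\lambda)x, \qquad \phi^*=\widetilde\phi-\int_\T\widetilde\phi .
\]
Choose $\alpha_1=0$ and $\alpha_2$ with $\lambda\alpha_2=\int_\T\widetilde\phi$, and define
\[
\Psi^*=\tfrac1\lambda\bigl(g^*-x-\alpha_1-\phi^*\bigr).
\]
Then (ii) holds by construction, $\phi^*$ and $\Psi^*$ are $C^1$ and $1$-periodic, and \eqref{eq:main} gives the invariance of the graph of $\Psi^*$. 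Uniqueness follows from the remark after Birkhoff's theorem. The two derivative identities I will use are \eqref{gdiff}, namely $(\phi^*)'(x)=g^{*\prime}(x)+\lambda/g^{*\prime}((g^*)^{-1}(x))-(1+\lambda)$, and $(\Psi^*)'=\bigl(g^{*\prime}-1-(\phi^*)'\bigr)/\lambda$.

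The key step is to choose the values of $g^{*\prime}$ at $\xi_0$ and at $\eta_0:=(g^*)^{-1}(\xi_0)$. Put $g^{*\prime}(\xi_0)=\lambda=1-\delta^2$. Then (iii) forces
\[
\frac{\lambda}{g^{*\prime}(\eta_0)}=1+\lambda-\delta-\lambda=1-\delta, \qquad\text{i.e.}\qquad g^{*\prime}(\eta_0)=\frac{1-\delta^2}{1-\delta}=1+\delta .
\]
With these values,
\[
(\Psi^*)'(\xi_0)=\frac{\lambda-1+\delta}{\lambda}=\frac{\delta(1-\delta)}{(1-\delta)(1+\delta)}=\frac{\delta}{1+\delta},
\]
which gives (iv) with equality.

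So I need a continuous $1$-periodic density $p^*$ with the following properties:
\begin{itemize}
\item it takes values in $[1-\delta^2,1+\delta]$;
\item it has mean $1$;
\item it equals $1-\delta^2$ near a point $\xi_0$ and $1+\delta$ near a point $\eta_0\neq\xi_0$.
\end{itemize}
I will take a piecewise-linear trapezoidal profile as in the proof of Proposition~\ref{lem:counterexample}: a plateau $1+\delta$ on a short interval, a plateau $1-\delta^2$ on an interval of appropriate length, linear transitions, and the value $1$ elsewhere. The plateau lengths are tuned so that $\int_0^1 p^*=1$. Then set $g^*(x)=d+\int_0^x p^*$ and choose $d$ so that $g^*(\eta_0)=\xi_0$. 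This is the same device as choosing $d$ with $g_*(1/2)=0$ in Proposition~\ref{lem:counterexample}. Since $p^*\ge 1-\delta^2=\lambda>0$, $g^*$ is a $C^1$ diffeomorphism with $C^1$ inverse.

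It remains to check (i), and this is where the range of $p^*$ matters. Since $g^{*\prime}\in[1-\delta^2,1+\delta]$, we get
\[
\frac{\lambda}{g^{*\prime}\circ (g^*)^{-1}}\in\Bigl[\frac{\lambda}{1+\delta},\frac{\lambda}{1-\delta^2}\Bigr]=[1-\delta,\,1].
\]
Therefore
\[
(\phi^*)'\in\bigl[(1-\delta^2)+(1-\delta)-(1+\lambda),\;(1+\delta)+1-(1+\lambda)\bigr]=[-\delta,\;\delta+\delta^2]\subset[-2\delta,2\delta].
\]
Because $\phi^*$ has zero mean, it vanishes somewhere, so $\|\phi^*\|_{C^0}\le\|(\phi^*)'\|_\infty\le 2\delta$. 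Hence $\|\phi^*\|_{C^1}\le 2\delta\le 3\delta$.

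The only delicate point is the one-sided bound: $g^{*\prime}$ must never exceed $1+\delta$ or drop below $1-\delta^2$ anywhere, including on the transition intervals. The trapezoidal profile respects these bounds automatically because it is monotone between plateaus. The mean-one condition is then just a linear constraint on the plateau lengths, which is solvable since $1-\delta^2<1<1+\delta$.
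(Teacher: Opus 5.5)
Your proposal is correct and follows essentially the same route as the paper. Both build a $C^1$ circle diffeomorphism $g^*$ with piecewise-linear derivative taking values in $[\lambda,1+\delta]$, with $g^{*\prime}(\xi_0)=\lambda$ and $g^{*\prime}((g^*)^{-1}(\xi_0))=1+\delta$, and then read off $\phi^*$, $\alpha_2$ and $\Psi^*$ from \eqref{eq:main}. The differences are cosmetic: you take $\alpha_1=0$ instead of $\tfrac12$ and leave the plateau lengths implicit, and your range computation $(\phi^*)'\in[-\delta,\delta+\delta^2]$, combined with the paper's max-definition of $\|\cdot\|_{C^1}$, gives the slightly sharper bound $2\delta$.
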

	Recall that if the map admits a $C^1$ invariant graph, then the circle diffeomorphism
	\begin{equation}\label{dfg}
		g^*(x)=x+\alpha_1+\lambda\Psi^*(x)+\phi^*(x)
	\end{equation}
	satisfies the Herman-Mather formula (\ref{eq:main})
	\begin{equation}
		g^*(x)+\lambda (g^*)^{-1}(x)=(1+\lambda)x+(1-\lambda)\alpha_1+\lambda\alpha_2+\phi^*(x).
	\end{equation}
		
		\subsection*{Proof of Proposition~\ref{lexx33}}
		By Corollary \ref{lexx2}, it suffices to show that there exists \(\xi_0\) such that
		\[
		(g^*)'(\xi_0) \leq \sqrt{\lambda}.
		\]
		From the definition of \(g^*\) and the construction in Lemma~\ref{16p}, a direct computation yields
		\[
		(g^*)'(\xi_0) = 1 + (\phi^*)'(\xi_0) + \lambda (\Psi^*)'(\xi_0) \leq 1 - \delta + \frac{\lambda \delta}{1+\delta} = \lambda < \sqrt{\lambda}.
		\]
		Hence, Proposition~\ref{lexx33} is proved.
		\subsection*{Proof of Lemma \ref{16p}}

		Fix $\lambda\in(0,1)$ and let $\delta=\sqrt{1-\lambda}\in(0,1)$.
		Choose a small parameter $\varepsilon=\frac18$ and define
		\[
		a''=\varepsilon\delta,\qquad b''=\frac12-\varepsilon,\qquad c''=\varepsilon\delta^2.
		\]
		
		\vspace{1ex}
		
		\noindent\textit{Construction of a diffeomorphism $g^*$.}
		Define a $1$-periodic continuous function $(g^*)':\mathbb{R}\to\mathbb{R}$ on $[0,1]$ by
		\[
		(g^*)'(x)=
		\begin{cases}
			1+\delta-\dfrac{\delta}{a''}x, & 0\le x\le a'',\\[4pt]
			1, & a''\le x\le b'',\\[4pt]
			1-\dfrac{1-\lambda}{\frac12-b''}(x-b''), & b''\le x\le\frac12,\\[4pt]
			\lambda+\dfrac{1-\lambda}{a''}\Bigl(x-\frac12\Bigr), & \frac12\le x\le\frac12+a'',\\[4pt]
			1, & \frac12+a''\le x\le 1-c'',\\[4pt]
			1+\dfrac{\delta}{c''}\Bigl(x-1+c''\Bigr), & 1-c''\le x\le 1.
		\end{cases}
		\]
		One checks that $(g^*)'$ is strictly positive, continuous, and satisfies
		\[
		\int_0^{1/2} (g^*)'(x)\,dx = \frac12,\qquad
		\int_0^1 (g^*)'(x)\,dx = 1.
		\]
		Set
		\[
		g^*(x)=\frac12+\int_0^x (g^*)'(t)\,dt,\qquad x\in\mathbb{R}.
		\]
		Then $g^*(0)=\frac12$, $ g^*(\frac12)=1$, and $g^*(x+1)=g^*(x)+1$.
		Thus $g^*$ is a $C^1$ orientation-preserving diffeomorphism of $\mathbb{R}$ with
		rotation number $1/2$. In particular $(g^*)^{-1}(x)$ exists and is $C^1$.
		
		\vspace{1ex}
		
		\noindent\textit{Definitions of $\phi^*$ and $\Psi^*$ .}
		Choose $\alpha_1=\frac12$ and define
		\[
		\widetilde\phi(x)=g^*(x)+\lambda (g^*)^{-1}(x)-(1+\lambda)x-\frac{1-\lambda}{2}.
		\]
		Let $c'=\int_0^1\widetilde\phi(x)\,dx$ and set
		\[
		\phi^*(x)=\widetilde\phi(x)-c',\qquad
		\alpha_2=\frac{c'}{\lambda}.
		\]
		Then $\int_{\T}\phi^*=0$.  Finally define
		\[
		\Psi^*(x)=\frac{g^*(x)-x-\alpha_1-\phi^*(x)}{\lambda}.
		\]
		A direct substitution shows that (A) holds.
		
		\vspace{1ex}
		
		\noindent\textit{Verification of (iii) and (iv).}
		Take $\xi_0=\frac12$.  Because $g^*(0)=\frac12$ and $g^*(\frac12)=1$, we have
		$(g^*)^{-1}(\xi_0)=0$.  From the definitions of $(g^*)'$,
		\[
		(g^*)'(\xi_0)=(g^*)'\Bigl(\frac12\Bigr)=\lambda,\qquad
		(g^*)'(0)=1+\delta.
		\]
		Using \[((g^*)^{-1})'(\xi_0)=\frac{1}{(g^*)'((g^*)^{-1}(\xi_0))}=\frac{1}{(g^*)'(0)}=\frac{1}{1+\delta},\] we compute
		\begin{align*}
			\left(\phi^*\right)'(\xi_0) &= (g^*)'(\xi_0)+\lambda((g^*)^{-1})'(\xi_0)-(1+\lambda) \\
			&= \lambda+\frac{\lambda}{1+\delta}-1-\lambda \\
			&= -\delta.
		\end{align*}

		We have $\left(\phi^*\right)'(\xi_0)=-\delta$ by construction. The definition of $\Psi^*$ gives
		\[
		\left(\Psi^*\right)'(x) = \frac{1}{\lambda}\bigl((g^*)'(x)-1-(\phi^*)'(x)\bigr).
		\]
		Plugging $x=\xi_0$:
		\[
		\left(\Psi^*\right)'(\xi_0) = \frac{1}{\lambda}\bigl((g^*)'(\xi_0)-1-(-\delta)\bigr)
		= \frac{1}{\lambda}(\lambda-1+\delta).
		\]
		Since $\lambda=1-\delta^2$, we have
		\[
		\left(\Psi^*\right)'(\xi_0) = \frac{-\delta^2+\delta}{1-\delta^2}=\frac{\delta}{1+\delta}.
		\]
		Thus condition (iv) holds with equality.
		
		\vspace{1ex}
		
		\noindent\textit{Estimate of the $C^1$-norm.}
		From the construction of $(g^*)'$ we have the uniform bounds
		\[
		|(g^*)'(x)-1|\le \delta,\qquad \min_{x\in\mathbb{T}} (g^*)'(x)=\lambda>0.
		\]
		For the inverse function,
		\[
		|((g^*)^{-1})'(x)-1|
		= \Bigl|\frac{1}{(g^*)'((g^*)^{-1}(x))}-1\Bigr|
		\le \max_{y:=(g^*)^{-1}(x)}\frac{|(g^*)'(y)-1|}{(g^*)'(y)}
		\le \frac{\delta}{\lambda}.
		\]
		Now compute the derivative of $\phi^*$:
		\[
		\left(\phi^*\right)'(x) = (g^*)'(x)-1 + \lambda\bigl(((g^*)^{-1})'(x)-1\bigr).
		\]
		Hence
		\[
		|(\phi^*)'(x)| \le |(g^*)'(x)-1| + \lambda|((g^*)^{-1})'(x)-1|
		\le \delta + \lambda\cdot\frac{\delta}{\lambda}=2\delta.
		\]
		Because $\int_{\T}\phi^*=0$ and $\phi^*$ is $1$-periodic, the Poincar\'e inequality gives
		\[
		\|\phi^*\|_{\infty}\le \frac12\|\left(\phi^*\right)'\|_{\infty}\le\delta.
		\]
		Therefore $\|\phi^*\|_{C^1}\le 2\delta+\delta=3\delta$, establishing (i).
		
		All the required properties are verified, with the constants
		$\alpha_1=\frac12$, $\alpha_2=c'/\lambda$, and the functions $\phi^*,\Psi^*$
		constructed above.

		\vspace{2em}

		\noindent\textbf{Data Availability Statement.}
		The authors state that this manuscript has no associated data and there is no conflict of interest.

	\end{document}